\newtheorem{thm}{Theorem}[section]
\newtheorem{thm-defn}[thm]{Theorem/Definition}
\newtheorem{prop}[thm]{Proposition}
\theoremstyle{definition}
\newtheorem{ques}[thm]{Question}
\theoremstyle{remark}
\newtheorem{rem}[thm]{Remark}
\numberwithin{equation}{section}
\begin{document}

\pagenumbering{arabic}

\title{Relative crystalline representations and weakly admissible modules}
\author{Yong Suk Moon}
\date{}

\maketitle

\begin{abstract}
Let $k$ be a perfect field of characteristic $p > 2$, and let $K$ be a finite totally ramified extension over $W(k)[\frac{1}{p}]$. Let $R_0$ be an unramified relative base ring over $W(k)\langle X_1^{\pm 1}, \ldots, X_d^{\pm 1}\rangle$, and let $R = R_0\otimes_{W(k)}\mathcal{O}_K$. We define relative $B$-pairs and study their relations to weakly admissible $R_0[\frac{1}{p}]$-modules and $\mathbf{Q}_p$-representations. As an application, when $R = \mathcal{O}_K[\![Y]\!]$ with $k = \overline{k}$, we show that every rank $2$ horizontal crystalline representation with Hodge-Tate weights in $[0, 1]$ whose associated isocrystal over $W(k)[\frac{1}{p}]$ is reducible arises from a $p$-divisible group over $R$. Furthermore, we give an example of a $B$-pair which arises from a weakly admissible $R_0[\frac{1}{p}]$-module but does not arise from a $\mathbf{Q}_p$-representation. 
\end{abstract}

\tableofcontents

\section{Introduction} \label{sec:1}

Let $k$ be a perfect field of characteristic $p > 2$, and let $W(k)$ be its ring of Witt vectors. Let $K$ be a finite totally ramified extension over $W(k)[\frac{1}{p}]$, and denote by $\mathcal{O}_K$ its ring of integers. Let $R_0$ be an unramified relative base ring over $W(k)\langle X_1^{\pm 1}, \ldots, X_d^{\pm 1}\rangle$ which is the $p$-adic completion of $W(k)[X_1^{\pm 1}, \ldots, X_d^{\pm 1}]$, and let $R = R_0\otimes_{W(k)}\mathcal{O}_K$. (cf. Section \ref{sec:2.1}). Examples of such $R$ include $\mathcal{O}_K\langle X_1^{\pm 1}, \ldots, X_d^{\pm 1}\rangle$ and the formal power series ring $\mathcal{O}_K[\![Y_1, \ldots, Y_d]\!]$ with $Y_i = X_i-1$.

Brinon developed $p$-adic Hodge theory in the relative case in \cite{brinon-relative}, which is studied further by Scholze in \cite{scholze-p-adic-hodge} and Kedlaya-Liu in \cite{kedlaya-liu-relative-padichodge}. Let $\overline{R}$ denote the union of finite $R$-subalgebras $R'$ of a fixed separable closure of $\mathrm{Frac}(R)$ such that $R'[\frac{1}{p}]$ is \'{e}tale over $R[\frac{1}{p}]$. Then $\mathrm{Spec}\overline{R}[\frac{1}{p}]$ is a pro-universal covering of $\mathrm{Spec}R[\frac{1}{p}]$, and $\overline{R}$ is the integral closure of $R$ in $\overline{R}[\frac{1}{p}]$. Let $\mathcal{G}_R \coloneqq \mathrm{Gal}(\overline{R}[\frac{1}{p}]/R[\frac{1}{p}]) = \pi_1^{\text{\'{e}t}}(\mathrm{Spec}R[\frac{1}{p}])$. In \cite{brinon-relative}, the relative crystalline period ring $B_{\mathrm{cris}}(R)$ is constructed, and the notions of \emph{crystalline} representations of $\mathcal{G}_R$ and \emph{filtered} $(\varphi, \nabla)$-\emph{modules over} $R_0[\frac{1}{p}]$ are defined generalizing those when the base is a $p$-adic field. In \emph{loc. cit.}, \emph{punctually weakly admissible} modules and \emph{weakly admissible} modules are also defined to generalize weakly admissible modules over a $p$-adic field. A fundemental open question concerning these objects is the following:

\begin{ques} \label{ques:1.1}
Which filtered $(\varphi, \nabla)$-modules over $R_0[\frac{1}{p}]$ arise from crystalline representations of $\mathcal{G}_R$?	
\end{ques}

A filtered $(\varphi, \nabla)$-module over $R_0[\frac{1}{p}]$ is said to be \textit{admissible} if it arises from a crystalline representation of $\mathcal{G}_R$. When the base is a $p$-adic field $K$, it is proved in \cite{colmez-fontaine} that a filtered $\varphi$-module over $K$ with zero monodromy is admissible if and only if it is weakly admissible. 

Another interesting question in relative $p$-adic Hodge theory concerns representations arising from a $p$-divisible group. For a $p$-divisible group $G_R$ over $\mathrm{Spec} R$, let $T_p(G_R) \coloneqq \mathrm{Hom}_{\overline{R}}(\mathbf{Q}_p/\mathbf{Z}_p, ~G_R\times_R \overline{R})$ be the associated Tate module. Then by \cite[Corollary 5.4.2]{kim-groupscheme-relative}, $(T_p(G_R)\otimes_{\mathbf{Z}_p}\mathbf{Q}_p)^{\vee}$ is a crystalline $\mathcal{G}_R$-representation whose Hodge-Tate weights lie in $[0, 1]$ (in this paper, we use the covariant version of the functor $D_{\mathrm{dR}}(\cdot)$ to define Hodge-Tate weights, which is different from the convention using the contravariant one). This raises the following natural question.

\begin{ques} \label{ques:1.2}
	Which crystalline representations of $\mathcal{G}_R$ whose Hodge-Tate weights lie in $[0, 1]$ arise from $p$-divisible groups over $\mathrm{Spec} R$?
\end{ques}

Kim showed in \cite[Theorem 3.5]{kim-groupscheme-relative} that the category of $p$-divisible groups over $R$ is anti-equivalent to the category of relative Breuil modules, which characterize the linear algebraic structure of corresponding weakly admissible modules. Hence, for crystalline representations with Hodge-Tate weights in $[0, 1]$, Question \ref{ques:1.2} is closely related to Question \ref{ques:1.1}. When the base is a $p$-adic field $K$, Kisin proved in \cite[Corollary 2.2.6]{kisin-crystalline} that every crystalline $\mathrm{Gal(\overline{K}/K)}$-representation whose Hodge-Tate weights lie in $[0, 1]$ arises from a $p$-divisible group over $\mathcal{O}_K$.  

In this paper, our objects of study center around Question \ref{ques:1.1} and \ref{ques:1.2} for some special cases. We define the category of $B$-pairs in the relative case and study the relations with $\mathbf{Q}_p$-representations and weakly admissible modules. As an application, when $R = \mathcal{O}_K[\![Y]\!]$ with $k = \overline{k}$, we compute $B$-pairs corresponding to certain weakly admissible modules and show the following theorem.

\begin{thm} \label{thm:1.3}
Let $R = \mathcal{O}_K[\![Y]\!]$ and suppose $k$ is algebraically closed. Let $V$ be a horizontal crystalline $\mathcal{G}_R$-representation of rank $2$ over $\mathbf{Q}_p$ with Hodge-Tate weights in $[0, 1]$ such that its associated isocrystal is reducible. Then there exists a $p$-divisible group $G_R$ over $R$ such that $(T_p(G_R)[\frac{1}{p}])^{\vee} \cong V$ as $\mathcal{G}_R$-representations. Furthermore, there exists a $B$-pair which arises from a weakly admissible $R_0[\frac{1}{p}]$-module but does not arise from a $\mathbf{Q}_p$-representation. 
\end{thm}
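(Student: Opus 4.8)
The plan is to split Theorem~\ref{thm:1.3} into its two assertions and handle them through the theory of relative $B$-pairs developed earlier. For the first assertion, I would start from $V$, a rank $2$ horizontal crystalline $\mathcal{G}_R$-representation with Hodge--Tate weights in $[0,1]$ whose associated isocrystal $D$ over $W(k)[\frac1p]$ is reducible. Reducibility of $D$ means there is a $\varphi$-stable line $D_1 \subset D$ with quotient $D_2 = D/D_1$; passing to the corresponding filtered $(\varphi,\nabla)$-module $\mathcal{D} = D_{\mathrm{cris}}(V)$ over $R_0[\frac1p]$, the line and quotient inherit filtrations, and since the Hodge--Tate weights lie in $[0,1]$ each of $\mathcal{D}_1, \mathcal{D}_2$ is a filtered $(\varphi,\nabla)$-module of rank $1$ with weights in $\{0,1\}$. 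Rank-$1$ objects over $R_0[\frac1p]$ are essentially classified (they correspond to unramified twists composed with powers of the cyclotomic-type character), so $\mathcal{D}_1$ and $\mathcal{D}_2$ are individually weakly admissible after the appropriate normalization, and in particular each arises from a $p$-divisible group (a twist of $\mathbf{Q}_p/\mathbf{Z}_p$ or $\mu_{p^\infty}$ over $R$) by the rank-$1$ case of Kim's equivalence. The key point is then to show $\mathcal{D}$ itself — sitting in the extension $0 \to \mathcal{D}_1 \to \mathcal{D} \to \mathcal{D}_2 \to 0$ — is weakly admissible (equivalently admissible, and corresponds to a relative Breuil module), hence by \cite[Theorem 3.5]{kim-groupscheme-relative} arises from a $p$-divisible group $G_R$ over $R$; one then checks via the $B$-pair computation that $(T_p(G_R)[\frac1p])^\vee \cong V$ as $\mathcal{G}_R$-representations, by matching both sides' associated $B$-pairs, which are determined by $\mathcal{D}$.

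For the extension to be admissible the crucial input is that $V$ exists as a genuine crystalline representation (given in the hypothesis), so its filtered module $\mathcal{D}$ is automatically admissible; the content is rather that the associated $B$-pair $W(\mathcal{D})$ is the one attached to a $p$-divisible group, i.e.\ that its $B_{\mathrm{cris}}^+$-lattice can be chosen compatibly with the Hodge filtration in $[0,1]$. I would verify this by an explicit description of $W(\mathcal{D})$ as an extension of the $B$-pair of $\mathcal{D}_2$ by that of $\mathcal{D}_1$ inside the relative $B$-pairs exact sequence, using that $H^1$ of the relevant period rings controls such extensions; the horizontality of $V$ (trivial monodromy, so the $\nabla$-structure is the tautological one coming from $\overline{R}$) is what lets the relative computation reduce to a computation over the arithmetic base $W(k)[\frac1p]$ together with a contribution from the variable $Y$. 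The main obstacle here is controlling the $Y$-direction: a priori the extension class lives in a relative Ext group that is larger than the arithmetic one, and I expect the bulk of the work — done in the body of the paper preceding this theorem via the $B$-pair computation for rank-$2$ reducible-isocrystal modules over $\mathcal{O}_K[\![Y]\!]$ — to be showing that every such class is still realized by a $p$-divisible group, i.e.\ that the relative Breuil module structure deforms over $R$.

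For the second assertion, I would produce the counterexample by going the other direction: exhibit a rank $2$ weakly admissible $R_0[\frac1p]$-module $\mathcal{D}'$ — of the same shape, an extension of one rank-$1$ object by another with reducible underlying isocrystal — whose attached $B$-pair $W(\mathcal{D}')$ is well-defined (weak admissibility over $R_0[\frac1p]$ suffices to build the $B$-pair, by the relative analogue of the $p$-adic field construction established earlier in the paper) but such that $W(\mathcal{D}')$ is \emph{not} in the essential image of the functor from $\mathbf{Q}_p$-representations of $\mathcal{G}_R$. Concretely I would choose the extension class so that, while the punctual/weak admissibility condition is met, the global $\mathcal{G}_R$-cohomology obstruction to gluing a local system is nonzero — for instance by picking the rank-$1$ pieces so that the arithmetic extension is forced to split (the relevant $\mathrm{Ext}^1$ over $W(k)[\frac1p]$ vanishes or is too small) while the $Y$-direction carries a genuine, non-crystalline-looking class that a $\mathbf{Q}_p$-representation could not support. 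The obstacle is calibration: the class must be large enough to be nonzero in the "$\mathbf{Q}_p$-representation" Ext but small enough (still in the correct filtration) to keep $\mathcal{D}'$ weakly admissible; I would detect the failure to come from a representation by computing $H^0$ or $H^1$ of $W(\mathcal{D}')$ against $B_e = B_{\mathrm{cris}}^{\varphi=1}$ and $B_{\mathrm{dR}}^+$ in the relative setting and showing the fundamental exact sequence of $B$-pairs does not split the way it must when a representation exists — this is where the discrepancy between "weakly admissible" and "admissible" in the relative case, which is the whole point of the example, becomes visible.
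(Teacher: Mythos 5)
There is a genuine gap in your treatment of the first assertion, and it sits exactly where the paper's real work is. Your plan to write $\mathcal{D}=D_{\mathrm{cris}}(V)$ as an extension of two rank-one filtered $(\varphi,\nabla)$-modules each coming from a $p$-divisible group does not get off the ground: since $k=\overline{k}$ and the slopes are $0$ and $1$, the $\varphi$-stable lines are spanned by $e_1$ (slope $1$) and $e_2$ (slope $0$), but the Hodge filtration line is $\mathrm{Fil}^1 D_R=(f(Y)e_1+g(Y)e_2)\cdot R[\frac{1}{p}]$, which in general is aligned with neither. Consequently the induced sub and quotient objects are not individually weakly admissible (the slope-$1$ sub acquires Hodge number $0$; the quotient's graded pieces are not even projective unless $f$ is a unit), so they do not arise from $p$-divisible groups and there is no extension of $p$-divisible groups to deform. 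The paper instead computes the $B$-pair $(W_e(D), W_{\mathrm{dR}}^{\nabla +}(D))$ explicitly in the basis $(e_1,e_2)$ and proves (Proposition \ref{prop:3.1}) that admissibility \emph{forces} $f(Y)$ to be a unit in $R[\frac{1}{p}]$: if not, applying $\theta$ to the equations defining $W_e(D)\cap W_{\mathrm{dR}}^{\nabla+}(D)$ kills a dimension and the intersection has $\mathbf{Q}_p$-rank $1$, contradicting that $D=D_{\mathrm{cris}}(V)$ with $V$ of rank $2$. Only after this can one normalize $\mathrm{Fil}^1 D_R=(e_1+pg_1(Y)e_2)\cdot R[\frac{1}{p}]$, write down the Breuil module $\mathcal{M}$ with $\mathrm{Fil}^1\mathcal{M}=\mathrm{Fil}^1 S\cdot\mathcal{M}+(e_1+p\mathfrak{g}_1e_2)\cdot S$, $\varphi(e_1)=pe_1$, $\varphi(e_2)=e_2$, $\nabla(e_i)=0$, invoke Kim's theorem, and identify $T(\mathcal{M})[\frac{1}{p}]$ with $V_{\mathrm{cris}}(D)^{\vee}$ (which again uses Proposition \ref{prop:3.1} to know the target has rank $2$, and uses the operator $N_{\mathcal{M}}$ to extend the $\mathcal{G}_{R_\infty}$-comparison to a $\mathcal{G}_R$-equivariant one). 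You correctly flag that "the bulk of the work" is showing the filtration is compatible with a Breuil-module lattice, but the Ext-group/deformation framework you sketch never produces the unit condition on $f(Y)$, and without it the construction cannot be carried out.

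For the second assertion your strategy is conceptually in the right direction (a weakly admissible module whose $B$-pair fails the representability test detected through the fundamental exact sequence), but it remains a sketch where the paper is concrete, and the calibration you worry about is resolved by the same Proposition \ref{prop:3.1}. The paper takes $K=W(k)[\frac{1}{p}]$ and $D=e_1\cdot R_0[\frac{1}{p}]\oplus e_2\cdot R_0[\frac{1}{p}]$ with $\varphi(e_1)=pe_1$, $\varphi(e_2)=e_2$, $\nabla(e_i)=0$ and $\mathrm{Fil}^1 D_R=((Y+p)e_1+e_2)\cdot R[\frac{1}{p}]$; weak admissibility is verified by base-changing to the closed point $Y\mapsto 0$ and to the generic point $W(k_g)$ and exhibiting strongly divisible lattices, and the non-representability follows because $Y+p$ is not a unit in $R[\frac{1}{p}]$, so the $\theta$-argument shows $W_e(D)\cap W_{\mathrm{dR}}^{\nabla+}(D)$ has rank $1$ while a rank-$2$ representation would give rank $2$ by Proposition \ref{prop:2.2}. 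Note also that your claim that weak admissibility alone "suffices to build the $B$-pair" is not something the paper establishes in general: that $(W_e(D),W_{\mathrm{dR}}^{\nabla+}(D))$ is a $B$-pair is checked here by the explicit computation of bases (freeness and the base-change isomorphism over $B_{\mathrm{dR}}^{\nabla}(R)$, using that $f(c)r(c)-g(c)h(c)$ is a unit), so your argument would need to supply that verification as well.
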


In particular, the last statement of Theorem \ref{thm:1.3} shows that the relative case is different from the case when the base is a $p$-adic field where every semi-stable $B$-pair of slope $0$ arises from a $\mathbf{Q}_p$-representation. It also answers negatively the question raised in \cite{brinon-relative} whether weakly admissible implies admissible in the relative case.

We remark that it is proved in \cite{liu-moon-rel-cryst} using a completely different method that when $R$ has Krull dimension $2$ with the ramification index $e < p-1$, then every crystalline representation with Hodge-Tate weights in $[0, 1]$ comes from a $p$-divisible group over $R$. However, the argument in \cite{liu-moon-rel-cryst} relies crucially on the assumption that the ramification is small, whereas the result in Theorem \ref{thm:1.3} holds for any ramification.

\section*{Acknowledgement}
I would like to express my sincere gratitude to Tong Liu for many helpful discussions and suggestions on this topic.

\section{$p$-adic Hodge Theory in the relative case} \label{sec:2}

\subsection{Crystalline and de Rham period rings} \label{sec:2.1}

We follow the notation as in the Introduction. We first recall the constructions and results of relative $p$-adic Hodge theory developed in \cite{brinon-relative}, using the same terminologies such as \emph{punctually weakly admissible modules} and \emph{weakly admissible modules}. Denote by $W(k)\langle X_1^{\pm 1}, \ldots, X_d^{\pm 1}\rangle$ the $p$-adic completion of the polynomial ring $W(k)[X_1^{\pm 1}, \ldots, X_d^{\pm 1}]$. Let $R_0$ be a ring obtained from $W(k)\langle X_1^{\pm 1}, \ldots, X_d^{\pm 1}\rangle$ by a finite number of iterations of the following operations:

\begin{itemize}
\item $p$-adic completion of an \'{e}tale extension;
\item $p$-adic completion of a localization;
\item completion with respect to an ideal containing $p$.	
\end{itemize}

\noindent We further assume that either $W(k)\langle X_1^{\pm 1}, \ldots, X_d^{\pm 1}\rangle \rightarrow R_0$ has geometrically regular fibers or $R_0$ has Krull dimension less than $2$, and that $k \rightarrow R_0/pR_0$ is geometrically integral and $R_0$ is an integral domain. 

$R_0/pR_0$ has a finite $p$-basis given by $X_1, \ldots, X_d$. The Witt vector Frobenius on $W(k)$ extends (not necessarily uniquely) to $R_0$, and we fix such a Frobenius endomorphism $\varphi: R_0 \rightarrow R_0$. Let $\widehat{\Omega}_{R_0} = \varprojlim_{n} \Omega_{(R_0/p^n)/W(k)}$ be the module of $p$-adically continuous K\"{a}hler differentials. Then $\widehat{\Omega}_{R_0} \cong \bigoplus_{i=1}^d R_0 \cdot d{X_i}$ by \cite[Proposition 2.0.2]{brinon-relative}. If $\nabla: R_0[\frac{1}{p}] \rightarrow R_0[\frac{1}{p}]\otimes_{R_0} \widehat{\Omega}_{R_0}$ is the universal continuous derivation, then $(R_0[\frac{1}{p}])^{\nabla = 0} = W(k)[\frac{1}{p}]$. We work over the base ring $R$ given by  $R \coloneqq R_0\otimes_{W(k)}\mathcal{O}_K$.

The relative de Rham period ring and the crystalline period ring are constructed as follows. Let $\displaystyle \overline{R}^{\flat} = \varprojlim_{\varphi} \overline{R}/p\overline{R}$. There exists a natural $W(k)$-linear surjective map $\theta: W(\overline{R}^{\flat}) \rightarrow \widehat{\overline{R}}$ which lifts the projection onto the first factor. Here, $\widehat{\overline{R}}$ denotes the $p$-adic completion of $\overline{R}$. Define $B_{\mathrm{dR}}^{\nabla +}(R) \coloneqq \varprojlim_{n} W(\overline{R}^{\flat})[\frac{1}{p}]/(\mathrm{ker}(\theta))^n$. Choose compatibly $\epsilon_n \in \overline{R}$ such that $\epsilon_0 =1, ~\epsilon_n = \epsilon_{n+1}^p$ with $\epsilon_1 \neq 1$, and let $\widetilde{\epsilon} = (\epsilon_n)_{n \geq 0} \in \overline{R}^{\flat}$. Then $t \coloneqq \log{[\widetilde{\epsilon}]} \in B_{\mathrm{dR}}^{\nabla +}(R)$ and $B_{\mathrm{dR}}^{\nabla +}(R)$ is $t$-torsion free. The horizontal de Rham period ring is defined to be $\displaystyle B_{\mathrm{dR}}^{\nabla}(R) = B_{\mathrm{dR}}^{\nabla +}(R)[\frac{1}{t}]$, equipped with the filtration $\mathrm{Fil}^j B_{\mathrm{dR}}^{\nabla}(R) = t^j B_{\mathrm{dR}}^{\nabla +}(R)$ for $j \in \mathbf{Z}$. The $\mathcal{G}_R$-action on $W(\overline{R}^{\flat})$ extends uniquely to $B_{\mathrm{dR}}^{\nabla}(R)$. Let $\theta_R: R\otimes_{W(k)}W(\overline{R}^{\flat}) \rightarrow \widehat{\overline{R}}$ be the $R$-linear extension of $\theta$, and denote by $A_{\mathrm{inf}}(\widehat{\overline{R}}/R)$ the completion of $R\otimes_{W(k)}W(\overline{R}^{\flat})$ for the topology given by the ideal $\theta_R^{-1}(p\widehat{\overline{R}})$. Let $B_{\mathrm{dR}}^+(R) = \varprojlim_n A_{\mathrm{inf}}(\widehat{\overline{R}}/R)[\frac{1}{p}]/(\mathrm{ker}(\theta_R))^n$. Define the de Rham period ring to be $B_{\mathrm{dR}}(R) = B_{\mathrm{dR}}^+(R)[\frac{1}{t}]$. For $j \geq 0$, we let $\mathrm{Fil}^j B_{\mathrm{dR}}^+(R) = (\mathrm{ker}(\theta_R))^j$ and $\displaystyle\mathrm{Fil}^0 B_{\mathrm{dR}}(R) = \sum_{n = 0}^{\infty} \frac{1}{t^n}\mathrm{Fil}^n B_{\mathrm{dR}}^+(R)$. For $j \in \mathbf{Z}$, let $\mathrm{Fil}^j B_{\mathrm{dR}}(R) = t^j \mathrm{Fil}^0 B_{\mathrm{dR}}(R)$. $B_{\mathrm{dR}}(R)$ is equipped with the connection $\nabla: B_{\mathrm{dR}}(R) \rightarrow B_{\mathrm{dR}}(R)\otimes_{R_0} \widehat{\Omega}_{R_0}$ which is $W(\overline{R}^{\flat})$-linear and extends the universal continuous derivation of $R$. $\nabla$ satisfies the Griffiths transversality. The $\mathcal{G}_R$-action on $R\otimes_{W(k)}W(\overline{R}^{\flat})$ extends uniquely to $B_{\mathrm{dR}}(R)$, and commutes with $\nabla$. We have a natural embedding $B_{\mathrm{dR}}^{\nabla}(R) \hookrightarrow B_{\mathrm{dR}}(R)$ compatible with the filtrations and $\mathcal{G}_R$-actions, and $B_{\mathrm{dR}}^{\nabla}(R) = (B_{\mathrm{dR}}(R))^{\nabla = 0}$. Furthermore, $B_{\mathrm{dR}}(R)^{\mathcal{G}_R} = R[\frac{1}{p}]$ and $(B_{\mathrm{dR}}^{\nabla}(R))^{\mathcal{G}_R} = K$.

For $i = 1, \ldots, d$, choose compatibly $X_{i, n} \in \overline{R}$ such that $X_{i, 0} = X_i$ and $X_{i, n} = X_{i, n+1}^p$, and let $[\widetilde{X_i}] \in \overline{R}^{\flat}$. Let $u_i = X_i\otimes 1-1\otimes [\widetilde{X_i}] \in R\otimes_{W(k)} W(\overline{R}^{\flat})$. The following proposition is proved in \cite{brinon-relative}.

\begin{prop} \label{prop:2.1} \emph{(cf. \cite[Proposition 5.1.4, 5.2.2, 5.2.5]{brinon-relative})}
The natural embedding 
\[
B_{\mathrm{dR}}^{\nabla +}(R)[\![u_1, \ldots, u_d]\!] \rightarrow B_{\mathrm{dR}}^{+}(R)
\]
is an isomorphism. Furthermore, $\displaystyle \mathrm{Fil}^0 B_{\mathrm{dR}}(R) = B_{\mathrm{dR}}^+(R)[\frac{u_1}{t}, \ldots, \frac{u_d}{t}]$.
\end{prop}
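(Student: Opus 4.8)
The plan is to prove the isomorphism $B_{\mathrm{dR}}^{\nabla+}(R)[\![u_1,\ldots,u_d]\!] \to B_{\mathrm{dR}}^+(R)$ by reducing to a statement about the graded pieces of the two natural filtrations and then bootstrapping with a completeness argument. First I would recall that $B_{\mathrm{dR}}^{\nabla+}(R)$ is $\ker(\theta)$-adically complete and separated, and that $t$ generates $\mathrm{Fil}^1 B_{\mathrm{dR}}^{\nabla+}(R)$ with $B_{\mathrm{dR}}^{\nabla+}(R)/tB_{\mathrm{dR}}^{\nabla+}(R) \cong \widehat{\overline{R}}[\frac1p]$ (the relative analogue of $\mathbf{C}_p$), by the results cited from Brinon. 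The key computation is that each $u_i$ lies in $\mathrm{Fil}^1 B_{\mathrm{dR}}^+(R) = \ker(\theta_R)$: indeed $\theta_R(u_i) = \theta_R(X_i\otimes 1 - 1\otimes[\widetilde{X_i}]) = X_i - X_i = 0$ in $\widehat{\overline{R}}$, since $\theta([\widetilde{X_i}])$ is the image of $X_{i,0}=X_i$. So the map in the statement is at least well defined and filtered.

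Next I would establish surjectivity of $\theta_R$ restricted appropriately and identify $\mathrm{gr}^0 B_{\mathrm{dR}}^+(R) = \widehat{\overline{R}}[\frac1p]$, then show that $\mathrm{gr}^1 B_{\mathrm{dR}}^+(R)$ is a free $\widehat{\overline{R}}[\frac1p]$-module on the classes of $t, u_1,\ldots,u_d$. The point is that $\ker(\theta_R)$ is generated by $t$ together with $u_1,\ldots,u_d$: modulo $\ker(\theta)$ we are looking at $R\otimes_{W(k)}W(\overline{R}^{\flat})$, where $R = R_0\otimes_{W(k)}\mathcal{O}_K$ and $R_0$ is (topologically, étale-locally) a polynomial/Laurent-polynomial ring in the $X_i$ over $W(k)$, so the "extra" part of the kernel beyond $\ker(\theta)$ is exactly the ideal encoding $X_i \mapsto [\widetilde{X_i}]$, i.e.\ generated by the $u_i$; one then checks these classes are linearly independent in $\mathrm{gr}^1$ over $\widehat{\overline{R}}[\frac1p]$ using that the $X_{i,n}$ contribute independent ``coordinates'' transcendental over the horizontal part (this is essentially the content of \cite[Prop 5.1.4]{brinon-relative}). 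From freeness of $\mathrm{gr}^1$ and the multiplicativity of the grading one gets that $\mathrm{gr}^\bullet B_{\mathrm{dR}}^+(R)$ is the polynomial ring $\widehat{\overline{R}}[\frac1p][\bar t, \bar u_1,\ldots,\bar u_d]$, while $\mathrm{gr}^\bullet$ of the source (with $u_i$-adic $\oplus$ $t$-adic filtration) is visibly the same polynomial ring. Since the map is filtered, induces an isomorphism on $\mathrm{gr}^0$, and both sides are complete and separated for their respective filtrations — which one checks are intertwined correctly, the $\ker(\theta_R)$-adic filtration on the target pulling back to the $(t,u_1,\ldots,u_d)$-adic filtration on the source — a standard successive-approximation / filtered-completeness argument upgrades the graded isomorphism to the claimed isomorphism of rings.

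For the final sentence, $\mathrm{Fil}^0 B_{\mathrm{dR}}(R) = B_{\mathrm{dR}}^+(R)[\frac{u_1}{t},\ldots,\frac{u_d}{t}]$, the inclusion $\supseteq$ is immediate since each $\frac{u_i}{t} \in t^{-1}\mathrm{Fil}^1 B_{\mathrm{dR}}^+(R) \subseteq \mathrm{Fil}^0 B_{\mathrm{dR}}(R)$ by the previous bullet and the definition of $\mathrm{Fil}^0 B_{\mathrm{dR}}(R)$. For $\subseteq$, an element of $\mathrm{Fil}^0 B_{\mathrm{dR}}(R)$ is by definition a finite sum $\sum_{n\geq 0} t^{-n} x_n$ with $x_n \in \mathrm{Fil}^n B_{\mathrm{dR}}^+(R) = (\ker\theta_R)^n$; using the just-proved description $B_{\mathrm{dR}}^+(R) = B_{\mathrm{dR}}^{\nabla+}(R)[\![u_1,\ldots,u_d]\!]$ and that $(\ker\theta_R)^n$ is generated over $B_{\mathrm{dR}}^+(R)$ by monomials $t^{a_0}u_1^{a_1}\cdots u_d^{a_d}$ with $a_0+\cdots+a_d = n$, one writes $t^{-n}x_n$ as a $B_{\mathrm{dR}}^+(R)$-combination of monomials $(\frac{u_1}{t})^{a_1}\cdots(\frac{u_d}{t})^{a_d} t^{a_0 - (n - a_1 - \cdots - a_d)} = (\frac{u_1}{t})^{a_1}\cdots(\frac{u_d}{t})^{a_d}$ times a nonnegative power of $t$, landing in $B_{\mathrm{dR}}^+(R)[\frac{u_1}{t},\ldots,\frac{u_d}{t}]$.

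The main obstacle I anticipate is the graded-piece computation: proving that the classes $\bar u_1,\ldots,\bar u_d$ are genuinely free (not just generating) over $\widehat{\overline{R}}[\frac1p]$ in $\mathrm{gr}^1 B_{\mathrm{dR}}^+(R)$, and more generally that no unexpected relations appear. This requires unwinding the construction of $A_{\mathrm{inf}}(\widehat{\overline{R}}/R)$ as a completed tensor product and carefully tracking how the $p$-basis $X_1,\ldots,X_d$ of $R_0/pR_0$ interacts with the Frobenius-twisted elements $[\widetilde{X_i}]$ — in effect one must see that $\widehat{\overline{R}}$ is "large enough" relative to the horizontal period ring that the $u_i$ behave like independent deformation parameters, which is exactly where the hypotheses on $R_0$ (geometrically regular fibers / small Krull dimension, geometrically integral special fiber) enter. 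Since this is precisely \cite[Prop 5.1.4, 5.2.2, 5.2.5]{brinon-relative}, in the write-up I would cite those results for the graded computation and present only the completeness bootstrap and the $\mathrm{Fil}^0$ manipulation in detail.
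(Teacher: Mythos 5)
The paper offers no proof of this proposition: it is imported directly from Brinon (Propositions 5.1.4, 5.2.2 and 5.2.5 of the cited work), so there is no internal argument to compare yours against. Your outline is correct and consistent with that source --- the $\mathrm{Fil}^0$ manipulation goes through once the power-series description gives $\ker(\theta_R)=(t,u_1,\ldots,u_d)$, and the graded computation you defer to Brinon is indeed where the real content lies; just note that the separatedness/completeness facts needed for your filtered bootstrap in this non-Noetherian setting are themselves part of what Brinon establishes, so they should fall under the same citation rather than be treated as routine.
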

  
For the horizontal crystalline and crystalline period rings, we first construct the integral ones. Let $A_{\mathrm{cris}}^{\nabla}(R)$ be the $p$-adic completion of the divided power envelope of $W(\overline{R}^{\flat})$ with respect to $\mathrm{ker}(\theta)$. The Witt vector Frobenius and $\mathcal{G}_R$-action on $W(\overline{R}^{\flat})$ extend uniquely to $A_{\mathrm{cris}}^{\nabla}(R)$. Let $\theta_{R_0}: R_0\otimes_{W(k)}W(\overline{R}^{\flat}) \rightarrow \widehat{\overline{R}}$ be the $R_0$-linear extension of $\theta$, and define $A_{\mathrm{cris}}(R)$ to be the $p$-adic completion of the divided power envelope of $R_0\otimes_{W(k)}W(\overline{R}^{\flat})$ with respect to $\mathrm{ker}(\theta_{R_0})$. The $\mathcal{G}_R$-action on $R_0\otimes_{W(k)}W(\overline{R}^{\flat})$ extends uniquely to $A_{\mathrm{cris}}(R)$. The Frobenius endomorphism on $R_0\otimes_{W(k)}W(\overline{R}^{\flat})$ given by $\varphi$ on $R_0$ and the Witt vector Frobenius on $W(\overline{R}^{\flat})$ extends uniquely to $A_{\mathrm{cris}}(R)$. We have the connection $\nabla: A_{\mathrm{cris}}(R) \rightarrow A_{\mathrm{cris}}(R)\otimes_{R_0} \widehat{\Omega}_{R_0}$ which is $W(\overline{R}^\flat)$-linear and extends the universal continuous derivation of $R_0$. The Frobenius on $A_{\mathrm{cris}}(R)$ is horizontal. We have a natural $\mathcal{G}_R$-equivariant embedding $A_{\mathrm{cris}}^{\nabla}(R) \hookrightarrow A_{\mathrm{cris}}(R)$, and $A_{\mathrm{cris}}(R)^{\nabla = 0} = A_{\mathrm{cris}}^{\nabla}(R)$. Moreover, $(A_{\mathrm{cris}}^{\nabla}(R))^{\mathcal{G}_R} = W(k)$ and $A_{\mathrm{cris}}(R)^{\mathcal{G}_R} = R_0$. Note that $t \in A_{\mathrm{cris}}(\mathbf{Z}_p)$ and $p$ divides $t^{p-1}$ in $A_{\mathrm{cris}}(\mathbf{Z}_p)$. $A_{\mathrm{cris}}(R)$ is $t$-torsion free, and we define $B_{\mathrm{cris}}^{\nabla}(R) = A_{\mathrm{cris}}^{\nabla}(R)[\frac{1}{t}]$ and $B_{\mathrm{cris}}(R) = A_{\mathrm{cris}}(R)[\frac{1}{t}]$, equipped with the Frobenius and $\mathcal{G}_R$-action extending those on $A_{\mathrm{cris}}(R)$. We extend the connection on $A_{\mathrm{cris}}(R)$ to $B_{\mathrm{cris}}(R)$ $t$-linearly. $B_{\mathrm{cris}}(R)\otimes_{R_0[\frac{1}{p}]} R[\frac{1}{p}]$ naturally embeds into $B_{\mathrm{dR}}(R)$ compatibly with the connections and $\mathcal{G}_R$-actions. 

Let $U_1 = \{x \in B_{\mathrm{cris}}^{\nabla}(R) \cap B_{\mathrm{dR}}^{\nabla +}(R), ~\varphi(x) = px\}$. The following proposition is shown in \cite{brinon-relative}.

\begin{prop} \label{prop:2.2} \emph{(cf. \cite[Lemma 6.2.22, Proposition 6.2.23]{brinon-relative})} The following sequences are exact:
\[
0 \rightarrow \mathbf{Q}_p\cdot t \rightarrow U_1 \stackrel{\theta}{\rightarrow} B_{\mathrm{dR}}^{\nabla +}(R)/\mathrm{Fil}^1 B_{\mathrm{dR}}^{\nabla +}(R) \rightarrow 0,
\]	
\[
0 \rightarrow \mathbf{Q}_p \rightarrow (B_{\mathrm{cris}}^{\nabla}(R))^{\varphi = 1} \rightarrow B_{\mathrm{dR}}^{\nabla}(R)/B_{\mathrm{dR}}^{\nabla +}(R) \rightarrow 0.
\]  
\end{prop}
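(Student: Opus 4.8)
The plan is to regard the two sequences as the \emph{horizontal} analogues of the classical fundamental exact sequences of Bloch--Kato and Fontaine, and to transport Fontaine's arguments to the (large) perfectoid base $\widehat{\overline{R}}$. The key observation is that $A_{\mathrm{cris}}^{\nabla}(R)$, $B_{\mathrm{cris}}^{\nabla}(R)$ and $B_{\mathrm{dR}}^{\nabla +}(R)$ are built out of $\overline{R}^{\flat}$ and the surjection $\theta\colon W(\overline{R}^{\flat})\to\widehat{\overline{R}}$ by exactly the recipe that produces $A_{\mathrm{cris}}$, $B_{\mathrm{cris}}$, $B_{\mathrm{dR}}^{+}$ over $\mathbf{Q}_p$ from $\theta\colon W(\mathbf{C}_p^{\flat})\to\mathcal{O}_{\mathbf{C}_p}$. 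So the only inputs I would need are: $\widehat{\overline{R}}$ is perfectoid, so that $\theta$ is surjective with principal kernel and the Frobenius on $\overline{R}/p\overline{R}$ is surjective; $t=\log[\widetilde{\epsilon}]$ satisfies $\varphi(t)=pt$ and $\theta(t)=0$; and $t$, like any generator of $\ker(\theta)$, generates $\mathrm{Fil}^1 B_{\mathrm{dR}}^{\nabla +}(R)$, so that $\theta$ induces an isomorphism $B_{\mathrm{dR}}^{\nabla +}(R)/\mathrm{Fil}^1 B_{\mathrm{dR}}^{\nabla +}(R)\cong\widehat{\overline{R}}[\frac{1}{p}]$ (this last using Proposition~\ref{prop:2.1} together with the $t$-torsion-freeness recorded before the statement). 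Granting these, both assertions become essentially formal diagram chases, as follows.

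For exactness on the left of the first sequence, $\mathbf{Q}_p\cdot t$ clearly lies in $U_1\cap\ker(\theta)$; conversely, if $x\in U_1$ with $\theta(x)=0$ then $x\in\mathrm{Fil}^1 B_{\mathrm{dR}}^{\nabla +}(R)=t\,B_{\mathrm{dR}}^{\nabla +}(R)$, so $y\coloneqq x/t\in B_{\mathrm{dR}}^{\nabla +}(R)$, and since $B_{\mathrm{cris}}^{\nabla}(R)=A_{\mathrm{cris}}^{\nabla}(R)[\frac{1}{t}]$ is $t$-torsion free and embeds into $B_{\mathrm{dR}}^{\nabla}(R)$ we also get $y\in B_{\mathrm{cris}}^{\nabla}(R)$; from $\varphi(x)=px$ and $\varphi(t)=pt$ we get $\varphi(y)=y$, so $y\in(B_{\mathrm{cris}}^{\nabla}(R)\cap B_{\mathrm{dR}}^{\nabla +}(R))^{\varphi=1}$. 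I claim this last group is $\mathbf{Q}_p$ — which is precisely left-exactness of the second sequence — whence $x\in\mathbf{Q}_p\cdot t$. To prove the claim, note that a $\varphi$-fixed $y\in B_{\mathrm{cris}}^{\nabla}(R)$ lying in $B_{\mathrm{dR}}^{\nabla +}(R)$ has all Frobenius and Hodge slopes zero, and a standard induction along the filtration (write $y=z/t^m$ with $z\in A_{\mathrm{cris}}^{\nabla}(R)$, $\varphi(z)=p^m z$, and descend $m$) reduces the claim to the slope-zero rigidity $A_{\mathrm{cris}}^{\nabla}(R)^{\varphi=1}=\mathbf{Z}_p$; the latter is proved exactly as in Fontaine's treatment of the absolute case, from the explicit description of $A_{\mathrm{cris}}^{\nabla}(R)/p A_{\mathrm{cris}}^{\nabla}(R)$ (alternatively, using $(A_{\mathrm{cris}}^{\nabla}(R))^{\mathcal{G}_R}=W(k)$). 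Inverting $t$ in the left-exact parts and passing to the colimit over the filtration then also yields left-exactness of the second sequence as stated.

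The real content is surjectivity, and this is the step I expect to be the main obstacle, as it is the only place where the relative situation uses something genuinely new. For the first sequence: given $\overline{a}\in\widehat{\overline{R}}$, use surjectivity of $\theta\colon W(\overline{R}^{\flat})\to\widehat{\overline{R}}$ to pick a lift, and then correct it to a $\varphi=p$ eigenvector by a series that converges in $A_{\mathrm{cris}}^{\nabla}(R)$, built from $p$-power roots extracted in $\overline{R}^{\flat}$ — this is exactly where surjectivity of the Frobenius on $\overline{R}/p\overline{R}$ is needed, and where the divided powers together with the fact (recalled above) that $p$ divides $t^{p-1}$ ensure convergence; this reproduces Fontaine's construction of the ``fundamental'' $\varphi=p$ periods over the base $\widehat{\overline{R}}$. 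For the second sequence one bootstraps up the filtration: the level-$n$ analogue of the first sequence — surjectivity of $\theta$ from $(B_{\mathrm{cris}}^{\nabla}(R)\cap B_{\mathrm{dR}}^{\nabla +}(R))^{\varphi=p^n}$ onto $B_{\mathrm{dR}}^{\nabla +}(R)/\mathrm{Fil}^n B_{\mathrm{dR}}^{\nabla +}(R)$ — is proved by the same construction, and dividing by $t^n$ and taking the colimit over $n$ identifies $(B_{\mathrm{cris}}^{\nabla}(R))^{\varphi=1}/\mathbf{Q}_p$ with $B_{\mathrm{dR}}^{\nabla}(R)/B_{\mathrm{dR}}^{\nabla +}(R)$. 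In short, every subtlety except one is already present in the absolute case (convergence of the correcting series; the rigidity $A_{\mathrm{cris}}^{\nabla}(R)^{\varphi=1}=\mathbf{Z}_p$), the sole genuinely relative input being that $\widehat{\overline{R}}$ is perfectoid so that $\theta$ and the Frobenius on $\overline{R}/p\overline{R}$ are surjective; with that granted, all the diagram chases go through verbatim.
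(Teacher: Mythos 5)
Note first that the paper itself does not prove Proposition \ref{prop:2.2}: it is quoted from Brinon (Lemma 6.2.22, Proposition 6.2.23 of \cite{brinon-relative}), so the only possible comparison is with Brinon's argument, which indeed adapts Fontaine's proof of the fundamental exact sequences to the horizontal period rings. Your skeleton matches that strategy: identify $B_{\mathrm{dR}}^{\nabla +}(R)/\mathrm{Fil}^1$ with $\widehat{\overline{R}}[\frac{1}{p}]$ via $\theta$ (though this comes from $t$ generating $\ker\theta$ in $B_{\mathrm{dR}}^{\nabla +}(R)$, not from Proposition \ref{prop:2.1}, which concerns $B_{\mathrm{dR}}^{+}(R)$), reduce left-exactness of the first sequence to $(B_{\mathrm{cris}}^{\nabla}(R)\cap B_{\mathrm{dR}}^{\nabla +}(R))^{\varphi=1}=\mathbf{Q}_p$, prove surjectivity by constructing $\varphi=p$ eigenvectors and successive approximation, and bootstrap to level $n$ and divide by $t^n$ for the second sequence. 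That is the right shape, and the surjectivity step is in reasonable shape granted surjectivity of Frobenius on $\overline{R}/p\overline{R}$ (itself a nontrivial property of $\overline{R}$ that must be invoked from \cite{brinon-relative}, not a formality).

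The genuine gap is in the kernel computations, exactly where you write that everything goes through ``verbatim'' once $\widehat{\overline{R}}$ is perfectoid. First, the parenthetical alternative proof of $A_{\mathrm{cris}}^{\nabla}(R)^{\varphi=1}=\mathbf{Z}_p$ from $(A_{\mathrm{cris}}^{\nabla}(R))^{\mathcal{G}_R}=W(k)$ is a non sequitur: a Frobenius-fixed element has no reason to be $\mathcal{G}_R$-invariant, so the Galois computation gives nothing. Second, and more seriously, Fontaine's proofs of the slope-zero rigidity and of the $t$-divisibility you need in the ``descend $m$'' induction (if $\varphi(z)=p^m z$ and $\theta(z)=0$, then $z/t$ again lies in the crystalline ring) rest on the valuation of $\mathcal{O}_{\mathbf{C}_p}^{\flat}$ and on $B_{\mathrm{dR}}^{+}$ being a discrete valuation ring with residue field $\mathbf{C}_p$ (so that $\theta(x)\neq 0$ forces $x$ to be a unit). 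In the relative setting $\overline{R}^{\flat}$ carries no valuation and the residue ring $\widehat{\overline{R}}[\frac{1}{p}]$ of $B_{\mathrm{dR}}^{\nabla +}(R)$ is not a field, so these steps require genuinely new arguments; moreover the statement $(B_{\mathrm{cris}}^{\nabla})^{\varphi=1}\cap B_{\mathrm{dR}}^{\nabla +}=\mathbf{Q}_p$ is false over a disconnected perfectoid base (a product of two copies of $\mathcal{O}_{\mathbf{C}_p}$ already gives $\mathbf{Q}_p\times\mathbf{Q}_p$), so perfectoidness cannot be ``the sole genuinely relative input'': connectedness/domain properties of $\overline{R}^{\flat}$ must enter. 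These are precisely the points that Brinon's Lemma 6.2.22 and Proposition 6.2.23 are devoted to, and deferring them to ``exactly as in Fontaine'' leaves the proof incomplete.
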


For a continuous $\mathcal{G}_R$-representation $V$ over $\mathbf{Q}_p$, we denote $D_{\mathrm{cris}}^{\nabla}(V) = (V \otimes_{\mathbf{Q}_p}B_{\mathrm{cris}}^{\nabla}(R))^{\mathcal{G}_R}$, $D_{\mathrm{cris}}(V) = (V \otimes_{\mathbf{Q}_p}B_{\mathrm{cris}}(R))^{\mathcal{G}_R}$ and $D_{\mathrm{dR}}(V) = (V \otimes_{\mathbf{Q}_p}B_{\mathrm{dR}}(R))^{\mathcal{G}_R}$. The natural morphisms
\[
\begin{split}
&\alpha_{\mathrm{cris}}^{\nabla}: D_{\mathrm{cris}}^{\nabla}(V)\otimes_{W(k)[\frac{1}{p}]} B_{\mathrm{cris}}^{\nabla}(R) \rightarrow V\otimes_{\mathbf{Q}_p} B_{\mathrm{cris}}^{\nabla}(R),\\
&\alpha_{\mathrm{cris}}: D_{\mathrm{cris}}(V)\otimes_{R_0[\frac{1}{p}]} B_{\mathrm{cris}}(R)  \rightarrow V\otimes_{\mathbf{Q}_p}  B_{\mathrm{cris}}(R),\\
&\alpha_{\mathrm{dR}}: D_{\mathrm{dR}}(V)\otimes_{R[\frac{1}{p}]} B_{\mathrm{dR}}(R)  \rightarrow V\otimes_{\mathbf{Q}_p} B_{\mathrm{dR}}(R)
\end{split}
\] 
are injective. We say $V$ is \emph{horizontal crystalline} (resp. \emph{crystalline}, \emph{de Rham}) if $\alpha_{\mathrm{cris}}^{\nabla}$ (resp. $\alpha_{\mathrm{cris}}$, $\alpha_{\mathrm{dR}}$) is an isomorphism. For any $\mathbf{Q}_p$-representation $V$, we have natural embeddings $D_{\mathrm{cris}}^{\nabla}(V)\otimes_{W(k)[\frac{1}{p}]}R_0[\frac{1}{p}] \hookrightarrow D_{\mathrm{cris}}(V)$ and $D_{\mathrm{cris}}(V)\otimes_{R_0[\frac{1}{p}]} R[\frac{1}{p}] \hookrightarrow D_{\mathrm{dR}}(V)$. If $V$ is horizontal crystalline, then $V$ is crystalline and the map $D_{\mathrm{cris}}^{\nabla}(V)\otimes_{W(k)[\frac{1}{p}]}R_0[\frac{1}{p}] \rightarrow D_{\mathrm{cris}}(V)$ is an isomorphism. If $V$ is crystalline, then $V$ is de Rham and the map $D_{\mathrm{cris}}(V)\otimes_{R_0[\frac{1}{p}]} R[\frac{1}{p}] \rightarrow D_{\mathrm{dR}}(V)$ is an isomorphism. 

We study the linear algebraic structure of $D_{\mathrm{cris}}(V)$ in the following way. A \emph{filtered} $(\varphi, \nabla)$-\emph{module over} $R_0[\frac{1}{p}]$ is defined to be a tuple $(D, \varphi_D, \nabla_D, \mathrm{Fil}^j D)$ such that
\begin{itemize}
\item $D$ is a finite projective $R_0[\frac{1}{p}]$-module	;
\item $\varphi_D: D \rightarrow D$ is $\varphi$-semilinear endomorphism such that $1\otimes \varphi_D$ is an isomorphism;
\item $\nabla_D: D \rightarrow D\otimes_{R_0} \widehat{\Omega}_{R_0}$ is an integrable connection which is topologically quasi-nilpotent, i.e., there exists a finitely generated $R_0$-submodule $M \subset D$ stable under $\nabla$ such that $M[\frac{1}{p}] = D$ and the induced connection on $M/pM$ is nilpotent. The Frobenius $\varphi_D$ is horizontal with respect to $\nabla_D$;
\item $\mathrm{Fil}^j D_R$ is a decreasing separated and exhaustive filtration by $R[\frac{1}{p}]$-submodules of $D_R\coloneqq D\otimes_{R_0[\frac{1}{p}]}R[\frac{1}{p}]$ such that the graded module $\mathrm{gr}^{\bullet}D_R$ is projective over $R[\frac{1}{p}]$. Furthermore, Griffiths transversality holds for the induced connection: $\nabla_D(\mathrm{Fil}^j D_R) \subset \mathrm{Fil}^{j-1} D_R\otimes_{R_0} \widehat{\Omega}_{R_0}$.
\end{itemize}

Denote by $\mathrm{MF}(R)$ be the category of filtered $(\varphi, \nabla)$-modules over $R_0[\frac{1}{p}]$, whose morphisms are $R_0[\frac{1}{p}]$-module morphisms compatible with all structures. It is equipped with tensor product and duality structures as in \cite[Section 7]{brinon-relative}. For $D \in \mathrm{MF}(R)$, its Hodge-Tate weights are defined to be integers $w \in \mathbf{Z}$ such that $\mathrm{gr}^w D \neq 0$. We define its Hodge number 
\[
t_H(D) \coloneqq \sum_{j \in \mathbf{Z}} j \cdot \mathrm{rank}_{R[\frac{1}{p}]}(\mathrm{gr}^j D_R).
\]   
Let $\mathfrak{p} \in \mathrm{Spec} R_0/pR_0$, and let $\kappa_{\mathfrak{p}}$ be the perfect closure $\varinjlim_{\varphi} (R_0/pR_0)/\mathfrak{p}$. By the universal property of $p$-adic Witt vectors, there exists a unique map $b_{\mathfrak{p}}: R_0 \rightarrow W(\kappa_{\mathfrak{p}})$ lifting $R_0 \rightarrow \kappa_{\mathfrak{p}}$ which is compatible with Frobenius (with the Witt vector Frobenius on $W(\kappa_{\mathfrak{p}})$). Then $D_{\mathfrak{p}} \coloneqq D\otimes_{R_0, b_\mathfrak{p}} W(\kappa_{\mathfrak{p}})$ is a filtered $\varphi$-module over $W(\kappa_{\mathfrak{p}})[\frac{1}{p}]$ with the induced filtration and Frobenius. We define the \emph{Newton number of} $D$ \emph{at} $\mathfrak{p}$ to be the Newton number of $D_{\mathfrak{p}}$, and denote it by $t_N (D, \mathfrak{p})$. We say $D$ is \emph{punctually weakly admissible} if for all $\mathfrak{p} \in \mathrm{Spec}(R_0/pR_0)$, the following conditions hold:
\begin{itemize}
\item $t_H(D) = t_N(D, \mathfrak{p})$;
\item For each sub-object $D'$ of $D$ in $\mathrm{MF}(R)$, $t_H(D') \leq t_N(D', \mathfrak{p})$. 	
\end{itemize}
\noindent Denote by $\mathrm{MF}^{\mathrm{pwa}}(R)$ the full subcategory of $\mathrm{MF}(R)$ consisting of punctually weakly admissible modules. 

For a $\mathcal{G}_R$-representation $V$ over $\mathbf{Q}_p$, we equip $D_{\mathrm{cris}}^{\nabla}(V)$ with the Frobenius induced from $B_{\mathrm{cris}}(R)$, $D_{\mathrm{cris}}(V)$ with the Frobenius and connection induced from $B_{\mathrm{cris}}(R)$, and $D_{\mathrm{dR}}(V)$ with the filtration and connection induced from $B_{\mathrm{dR}}(R)$. Then $D_{\mathrm{cris}}(V)^{\nabla = 0} = D_{\mathrm{cris}}^{\nabla}(V)$ and the map $D_{\mathrm{cris}}(V)\otimes_{R_0[\frac{1}{p}]}R[\frac{1}{p}] \rightarrow D_{\mathrm{dR}}(V)$ is compatible with connections. If $V$ is horizontal crystalline, then the isomorphism $D_{\mathrm{cris}}^{\nabla}(V)\otimes_{W(k)[\frac{1}{p}]}R_0[\frac{1}{p}] \rightarrow D_{\mathrm{cris}}(V)$ is compatible with Frobenius. Note that if $V$ is crystalline, then it is horizontal crystalline if and only if the map $D_{\mathrm{cris}}^{\nabla}(V)\otimes_{W(k)[\frac{1}{p}]}R_0[\frac{1}{p}] \rightarrow D_{\mathrm{cris}}(V)$ is an isomorphism, i.e., if and only if $D_{\mathrm{cris}}(V)$ is generated by its parallel elements.

If $V$ is crystalline, we further equip $D_{\mathrm{cris}}(V)\otimes_{R_0[\frac{1}{p}]}R[\frac{1}{p}]$ with the filtration induced by $D_{\mathrm{dR}}(V)$. Then we have $D_{\mathrm{cris}}(V) \in \mathrm{MF}^{\mathrm{pwa}}(R)$ by \cite[Proposition 8.3.4]{brinon-relative}. 

For $D \in \mathrm{MF}(R)$, define 
\[
V_{\mathrm{cris}}(D) \coloneqq (D\otimes_{R_0[\frac{1}{p}]}B_{\mathrm{cris}}(R))^{\nabla = 0, \varphi = 1} \cap \mathrm{Fil}^0((D_R\otimes_{R[\frac{1}{p}]}B_{\mathrm{dR}}(R))^{\nabla = 0})
\]
where $D\otimes_{R_0[\frac{1}{p}]}B_{\mathrm{cris}}(R)$ is equipped with the Frobenius and connection given by the tensor product, and $D_R\otimes_{R[\frac{1}{p}]}B_{\mathrm{dR}}(R)$ is equipped with the filtration and connection given by the tensor product. Then $V_{\mathrm{cris}}(D)$ is a continuous $\mathbf{Q}_p$-representation of $\mathcal{G}_R$. We say $D$ is \emph{admissible} if there exists a crystalline representation $V$ such that $D \cong D_{\mathrm{cris}}(V)$ in $\mathrm{MF}(R)$, and denote by $\mathrm{MF}^{\mathrm{a}}(R)$ the full subcategory of $\mathrm{MF}^{\mathrm{pwa}}(R)$ consisting of admissible modules. Then by \cite[Theorem 8.5.2]{brinon-relative}, $D_{\mathrm{cris}}$ and $V_{\mathrm{cris}}$ are quasi-inverse equivalences of Tannakian categories between the category of crystalline representations and $\mathrm{MF}^{\mathrm{a}}(R)$. It is not known precisely which punctually weakly admissible modules are admissible.

We say $D \in \mathrm{MF}(R)$ is \emph{weakly admissible} if $D$ is punctually weakly admissible and there exists a finite \'{e}tale extension $R_0'$ over $R_0$ such that $D_{\mathrm{cris}}(V)\otimes_{R_0[\frac{1}{p}]}R_0'[\frac{1}{p}]$ is free over $R_0'[\frac{1}{p}]$. For characters, $D_{\mathrm{cris}}$ induces an equivalence between the category of crystalline characters of $\mathcal{G}_R$ and the category of weakly admissible $R_0[\frac{1}{p}]$-modules of rank $1$. However, it is not known whether $D_{\mathrm{cris}}(V)$ is weakly admissible for any crystalline representation $V$.

\subsection{Relative $B$-pairs} \label{sec:2.2}

In \cite{berger-B-pair}, $B$-pairs are studied when the base is a $p$-adic field. There is a natural fully faithful functor from the category of $\mathbf{Q}_p$-representations to the category of $B$-pairs, and the category of $B$-pairs is equivalent to that of $(\varphi, \Gamma)$-modules over the Robba ring. We define the category of $B$-pairs in the relative case and study its relations to $\mathbf{Q}_p$-representations and admissible modules. 

Let $B_e(R) \coloneqq (B_{\mathrm{cris}}^{\nabla}(R))^{\varphi = 1}$. A $B$-\emph{pair} $W = (W_e, W_{\mathrm{dR}}^{\nabla +})$ is given by a finite free $B_e(R)$-module $W_e$ equipped with a semi-linear $\mathcal{G}_R$-action and a finite free $B_{\mathrm{dR}}^{\nabla +}(R)$-module $W_{\mathrm{dR}}^{\nabla +}$ equipped with a semi-linear $\mathcal{G}_R$-action such that 
\[
W_e\otimes_{B_e(R)} B_{\mathrm{dR}}^{\nabla}(R) \cong W_{\mathrm{dR}}^{\nabla +}\otimes_{B_{\mathrm{dR}}^{\nabla +}(R)} B_{\mathrm{dR}}^{\nabla}(R)
\]
as $B_{\mathrm{dR}}^{\nabla}(R)$-modules compatible with $\mathcal{G}_R$-actions. Denote by $B\mathrm{-Pair}(R)$ the category of $B$-pairs whose morphisms are pairs of $B_e(R)$-module and $B_{\mathrm{dR}}^{\nabla +}(R)$-module morphisms compatible with $\mathcal{G}_R$-actions and with isomorphisms over $B_{\mathrm{dR}}^{\nabla}(R)$.

We have a natural functor $W_R$ from the category of $\mathbf{Q}_p$-representations of $\mathcal{G}_R$ to $B\mathrm{-Pair}(R)$ given by $W_R(V) = (V\otimes_{\mathbf{Q}_p}B_e(R), ~V\otimes_{\mathbf{Q}_p}B_{\mathrm{dR}}^{\nabla +}(R))$.  

\begin{prop} \label{prop:2.3}
The functor $W_R$ is fully faithful. Furthermore, if $V$ is a crystalline $\mathcal{G}_R$-representation, then
\[
W_R(V) \cong ((D_{\mathrm{cris}}(V)\otimes_{R_0[\frac{1}{p}]}B_{\mathrm{cris}}(R))^{\nabla = 0, \varphi = 1}, ~\mathrm{Fil}^0(D_{\mathrm{dR}}(V)\otimes_{R[\frac{1}{p}]}B_{\mathrm{dR}}(R))^{\nabla = 0})
\]
as $B$-pairs.
\end{prop}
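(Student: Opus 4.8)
The plan is to prove the two assertions of Proposition~\ref{prop:2.3} separately. For full faithfulness of $W_R$, I would mimic the classical argument of Berger: given $\mathbf{Q}_p$-representations $V_1, V_2$, a morphism $W_R(V_1) \to W_R(V_2)$ is an element of $\mathrm{Hom}_{\mathcal{G}_R}(V_1, V_2 \otimes_{\mathbf{Q}_p} B_e(R))$ together with a compatible element over $B_{\mathrm{dR}}^{\nabla +}(R)$; twisting by $V_1^\vee$, this amounts to showing that
\[
(B_e(R))^{\mathcal{G}_R} \cap \bigl(B_{\mathrm{dR}}^{\nabla +}(R)\bigr)^{\mathcal{G}_R} = \mathbf{Q}_p
\]
inside $\bigl(B_{\mathrm{dR}}^{\nabla}(R)\bigr)^{\mathcal{G}_R} = K$. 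The key input is the second exact sequence of Proposition~\ref{prop:2.2}: taking $\mathcal{G}_R$-invariants of
\[
0 \to \mathbf{Q}_p \to (B_{\mathrm{cris}}^{\nabla}(R))^{\varphi = 1} \to B_{\mathrm{dR}}^{\nabla}(R)/B_{\mathrm{dR}}^{\nabla +}(R) \to 0
\]
identifies the elements of $B_e(R)$ that land in $\mathrm{Fil}^0$ (i.e.\ in $B_{\mathrm{dR}}^{\nabla +}(R)$) with $\mathbf{Q}_p$ plus a correction measured by $H^1(\mathcal{G}_R, \mathbf{Q}_p)$-type terms; after tensoring with $V_2$ and chasing through the long exact sequence, one concludes that a $\mathcal{G}_R$-equivariant $B_e(R)$-linear map which extends to a $B_{\mathrm{dR}}^{\nabla +}(R)$-linear map on the lattice necessarily comes from a $\mathbf{Q}_p$-linear map $V_1 \to V_2$. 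The argument is formally the same as Berger's in the case of a $p$-adic base; the only thing to check is that the relative period rings satisfy the same invariant-ring computations, which are precisely recorded in Section~\ref{sec:2.1} and Proposition~\ref{prop:2.2}.

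For the second assertion, suppose $V$ is crystalline. I would first observe that, by definition of $B_e(R)$ and by the fact that $B_{\mathrm{cris}}(R)^{\mathcal{G}_R} = R_0$ with $A_{\mathrm{cris}}(R)^{\nabla=0} = A_{\mathrm{cris}}^{\nabla}(R)$, the crystalline comparison isomorphism $\alpha_{\mathrm{cris}}$ restricts to an isomorphism
\[
\bigl(D_{\mathrm{cris}}(V) \otimes_{R_0[\frac{1}{p}]} B_{\mathrm{cris}}(R)\bigr)^{\nabla = 0, \varphi = 1} \;\xrightarrow{\;\sim\;}\; \bigl(V \otimes_{\mathbf{Q}_p} B_{\mathrm{cris}}(R)\bigr)^{\nabla = 0, \varphi = 1} = V \otimes_{\mathbf{Q}_p} B_e(R),
\]
where the last equality uses $B_{\mathrm{cris}}(R)^{\nabla = 0} = B_{\mathrm{cris}}^{\nabla}(R)$ together with $(B_{\mathrm{cris}}^{\nabla}(R))^{\varphi=1} = B_e(R)$; this identifies the $W_e$-component. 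For the de Rham lattice, one uses the de Rham comparison $\alpha_{\mathrm{dR}}$, which is an isomorphism compatible with filtrations and connections, to get
\[
\mathrm{Fil}^0\bigl(D_{\mathrm{dR}}(V) \otimes_{R[\frac{1}{p}]} B_{\mathrm{dR}}(R)\bigr)^{\nabla = 0} \;\xrightarrow{\;\sim\;}\; \mathrm{Fil}^0\bigl(V \otimes_{\mathbf{Q}_p} B_{\mathrm{dR}}(R)\bigr)^{\nabla = 0},
\]
and I would check that the right-hand side equals $V \otimes_{\mathbf{Q}_p} \mathrm{Fil}^0 B_{\mathrm{dR}}^{\nabla +}(R) = V \otimes_{\mathbf{Q}_p} B_{\mathrm{dR}}^{\nabla +}(R)$, since $V$ sits in filtration degree $0$ with trivial filtration jumps as a plain $\mathbf{Q}_p$-vector space and $(B_{\mathrm{dR}}(R))^{\nabla=0} = B_{\mathrm{dR}}^{\nabla}(R)$ with $\mathrm{Fil}^0 B_{\mathrm{dR}}^{\nabla}(R) = B_{\mathrm{dR}}^{\nabla +}(R)$. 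Finally I would verify that the isomorphism over $B_{\mathrm{dR}}^{\nabla}(R)$ gluing the two components is the one induced by the comparison isomorphisms, so that the resulting $B$-pair is isomorphic to $W_R(V) = (V \otimes_{\mathbf{Q}_p} B_e(R), V \otimes_{\mathbf{Q}_p} B_{\mathrm{dR}}^{\nabla +}(R))$.

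The main obstacle I anticipate is the identification of the de Rham lattice, i.e.\ showing that taking horizontal sections commutes appropriately with taking $\mathrm{Fil}^0$ and that $\mathrm{Fil}^0\bigl(V \otimes_{\mathbf{Q}_p} B_{\mathrm{dR}}(R)\bigr)^{\nabla=0}$ is exactly $V \otimes_{\mathbf{Q}_p} B_{\mathrm{dR}}^{\nabla+}(R)$ rather than something larger. This requires knowing that the connection $\nabla$ on $B_{\mathrm{dR}}(R)$ interacts with the filtration via Griffiths transversality in a sufficiently controlled way; the explicit description $B_{\mathrm{dR}}^+(R) \cong B_{\mathrm{dR}}^{\nabla+}(R)[\![u_1, \ldots, u_d]\!]$ and $\mathrm{Fil}^0 B_{\mathrm{dR}}(R) = B_{\mathrm{dR}}^+(R)[\frac{u_1}{t}, \ldots, \frac{u_d}{t}]$ from Proposition~\ref{prop:2.1}, combined with $B_{\mathrm{dR}}^{\nabla}(R) = B_{\mathrm{dR}}(R)^{\nabla=0}$, should make this a finite (if slightly delicate) computation: horizontal elements of $\mathrm{Fil}^0 B_{\mathrm{dR}}(R)$ must have all their $u_i/t$-coefficients vanish after imposing $\nabla = 0$, forcing them into $\mathrm{Fil}^0 B_{\mathrm{dR}}^{\nabla}(R) = B_{\mathrm{dR}}^{\nabla+}(R)$. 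Everything else is a formal consequence of the comparison isomorphisms and the invariant-ring identities already established in Section~\ref{sec:2.1}.
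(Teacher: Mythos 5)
Your proposal is correct and follows essentially the same route as the paper: full faithfulness reduces to $B_e(R)\cap B_{\mathrm{dR}}^{\nabla+}(R)=\mathbf{Q}_p$, which falls directly out of the second exact sequence of Proposition~\ref{prop:2.2} (the intersection is the kernel of $B_e(R)\to B_{\mathrm{dR}}^{\nabla}(R)/B_{\mathrm{dR}}^{\nabla+}(R)$, so no $H^1$ or invariant-chasing is actually needed), and the crystalline case is obtained by restricting $\alpha_{\mathrm{cris}}$ and $\alpha_{\mathrm{dR}}$ to the horizontal, $\varphi=1$, and $\mathrm{Fil}^0$ parts exactly as you describe. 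The identification $\mathrm{Fil}^0\bigl(V\otimes_{\mathbf{Q}_p}B_{\mathrm{dR}}(R)\bigr)^{\nabla=0}=V\otimes_{\mathbf{Q}_p}B_{\mathrm{dR}}^{\nabla+}(R)$ that you flag as the delicate point is precisely what the paper uses as well, resting on $B_{\mathrm{dR}}(R)^{\nabla=0}=B_{\mathrm{dR}}^{\nabla}(R)$ together with the explicit description in Proposition~\ref{prop:2.1}.
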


\begin{proof}
We have $B_e(R) \cap B_{\mathrm{dR}}^{\nabla +}(R) = \mathbf{Q}_p$ by Proposition \ref{prop:2.2}, so $W_R$ is fully faithful. 

If $V$ is crystalline, then the maps
\[
\alpha_{\mathrm{cris}}: D_{\mathrm{cris}}(V)\otimes_{R_0[\frac{1}{p}]} B_{\mathrm{cris}}(R)  \rightarrow V\otimes_{\mathbf{Q}_p}  B_{\mathrm{cris}}(R)
\]
and 
\[
\alpha_{\mathrm{dR}}: D_{\mathrm{dR}}(V)\otimes_{R[\frac{1}{p}]} B_{\mathrm{dR}}(R)  \rightarrow V\otimes_{\mathbf{Q}_p} B_{\mathrm{dR}}(R)
\]
are isomorphisms. Thus,
\[
(V\otimes_{\mathbf{Q}_p} B_{\mathrm{cris}}(R))^{\nabla = 0, \varphi = 1} = V\otimes_{\mathbf{Q}_p}B_e(R) \cong (D_{\mathrm{cris}}(V)\otimes_{R_0[\frac{1}{p}]}B_{\mathrm{cris}}(R))^{\nabla = 0, \varphi = 1}
\]
and
\[
\mathrm{Fil}^0(V\otimes_{\mathbf{Q}_p} B_{\mathrm{dR}}(R))^{\nabla = 0} = V\otimes_{\mathbf{Q}_p}B_{\mathrm{dR}}^{\nabla +}(R) \cong \mathrm{Fil}^0(D_{\mathrm{dR}}(V)\otimes_{R[\frac{1}{p}]}B_{\mathrm{dR}}(R))^{\nabla = 0}.
\]
Furthermore, the diagram connecting $\alpha_{\mathrm{cris}}$ and $\alpha_{\mathrm{dR}}$ induced by $D_{\mathrm{cris}}(V)\otimes_{R_0[\frac{1}{p}]}R[\frac{1}{p}] \cong D_{\mathrm{dR}}(V)$ and the embedding $B_{\mathrm{cris}}(R)\otimes_{R_0[\frac{1}{p}]}R[\frac{1}{p}] \rightarrow B_{\mathrm{dR}}(V)$ is commutative. This proves the second statement. 
\end{proof}

\noindent If we denote by $B\mathrm{-Pair}^{\mathrm{rep}}(R)$ the full subcategory of $B\mathrm{-Pair}(R)$ given by the essential image of $W_R$, then the functor $(W_e, W_{\mathrm{dR}}^{\nabla +}) \mapsto W_e \cap W_{\mathrm{dR}}^{\nabla +}$ from $B\mathrm{-Pair}^{\mathrm{rep}}(R)$ to the category of $\mathbf{Q}_p$-representations is a quasi-inverse to $W_R$ by Proposition \ref{prop:2.2}.  

For any weakly admissible $R_0[\frac{1}{p}]$-module $D$, we denote $W_e(D) \coloneqq (D\otimes_{R_0[\frac{1}{p}]}B_{\mathrm{cris}}(R))^{\nabla = 0, \varphi = 1}$ and $W_{\mathrm{dR}}^{\nabla +}(D) \coloneqq \mathrm{Fil}^0(D_R\otimes_{R[\frac{1}{p}]}B_{\mathrm{dR}}(R))^{\nabla = 0}$.

\section{Horizontal crystalline representations of rank $2$ when $R = \mathcal{O}_K[\![Y]\!]$ and $k = \overline{k}$} \label{sec:3}

In this section, we consider the case when $R_0 = W(k)[\![Y]\!]$ with algebraically closed residue field $k$ and prove Theorem \ref{thm:1.3}. Let $R = R_0\otimes_{W(k)}\mathcal{O}_K$ as above, and choose a uniformizer $\varpi \in \mathcal{O}_K$. Equip $R_0$ with the Frobenius given by $Y \mapsto Y^p$. Note that $R_0$ is isomorphic to the completion of $W(k)\langle X^{\pm 1} \rangle$ with respect to the ideal $(p, X-1)$ via $X \mapsto Y+1$. Thus, by Proposition \ref{prop:2.1}, if we let $u = (Y+1)\otimes 1-1\otimes [\widetilde{Y+1}] \in R\otimes_{W(k)} W(\overline{R}^\flat)$, then 
\[
B_{\mathrm{dR}}^{\nabla +}(R)[\![u]\!] = B_{\mathrm{dR}}^+(R), ~~\mathrm{Fil}^0 B_{\mathrm{dR}}(R) = B_{\mathrm{dR}}^+(R)[\frac{u}{t}].
\]

Let $V$ be a horizontal crystalline $\mathcal{G}_R$-representation of rank $2$ whose Hodge-Tate weights lie in $[0, 1]$. $D_{\mathrm{cris}}^{\nabla}(V)$ is an isocrystal over $W(k)[\frac{1}{p}]$, and we have a $\varphi$-equivariant isomorphism $D_{\mathrm{cris}}^{\nabla}(V)\otimes_{W(k)[\frac{1}{p}]}R_0[\frac{1}{p}] \cong D_{\mathrm{cris}}(V)$. Denote $D = D_{\mathrm{cris}}(V)$ and $D_R^1 = \mathrm{Fil}^1 D_R$. We say $D$ is \'{e}tale (resp. multiplicative) if $D_R^1 = D_R$ (resp. $D_R^1 = 0$). If $D$ is \'{e}tale (resp. multiplicative), then it is induced from an \'{etale} (resp. a multiplicative) filtered $\varphi$-module $D_{\mathrm{cris}}^{\nabla}(V)$. In particular, $V$ arises from a $p$-divisible group over $R$ in both \'{e}tale and multiplicative cases.

Now, assume $D_R^1$ has rank $1$ over $R[\frac{1}{p}]$. Then $t_H(D) = 1$, and both $D_R^1$ and $D_R/D_R^1$ are free over $R[\frac{1}{p}]$ since $R[\frac{1}{p}]$ is a principal ideal domain. Suppose further that $D_{\mathrm{cris}}^{\nabla}(V)$ is reducible as an isocrytal over $W(k)[\frac{1}{p}]$. Since $k = \overline{k}$, we can apply the Dieudonn\'{e}-Manin classification. Note that $D$ is weakly admissible and thus the slopes of all isoclinic subobjects of $D_{\mathrm{cris}}^{\nabla}(V)$ are non-negative, since each isoclinic subobject of $D_{\mathrm{cris}}^{\nabla}(V)$ induces a subobject of $D$. Hence, we can choose a $W(k)[\frac{1}{p}]$-basis $(e_1, e_2)$ of $D_{\mathrm{cris}}^{\nabla}(V)$ such that 
\[
\begin{split}
\varphi(e_1) &= pe_1,\\
\varphi(e_2) & = e_2.	
\end{split}
\]
Then 
\[
W_e(D) = (D\otimes_{R_0[\frac{1}{p}]} B_{\mathrm{cris}}(R))^{\nabla = 0, \varphi = 1} = \frac{1}{t}e_1\cdot B_e \oplus e_2\cdot B_e.
\]
On the other hand, since $D_R^1$ and $D_R/D_R^1$ are free of rank $1$ over $R[\frac{1}{p}]$, we have 
\[
D_R^1 = (f(Y)e_1+g(Y)e_2)\cdot R[\frac{1}{p}]
\] 
for some $f(Y), g(Y) \in \mathcal{O}_K[\![Y]\!]$ such that either $\varpi \nmid f(Y)$ or $\varpi \nmid g(Y)$ in $\mathcal{O}_K[\![Y]\!]$ and that there exist $h(Y), r(Y) \in \mathcal{O}_K[\![Y]\!]$ with $(f(Y)r(Y)-g(Y)h(Y))$ being a unit in $R[\frac{1}{p}]$. Then, 
\[
D_R \cong (f(Y)e_1+g(Y)e_2)\cdot R[\frac{1}{p}] \oplus (h(Y)e_1+r(Y)e_2)\cdot R[\frac{1}{p}]
\]
as $R[\frac{1}{p}]$-modules, and
\[
\mathrm{Fil}^0 (D_R\otimes_{R[\frac{1}{p}]} B_{\mathrm{dR}}(R)) = \frac{f(Y)e_1+g(Y)e_2}{t}\cdot \mathrm{Fil}^0 B_{\mathrm{dR}}(R)\oplus (h(Y)e_1+r(Y)e_2)\cdot \mathrm{Fil}^0 B_{\mathrm{dR}}(R).
\]
Denote $c = [\widetilde{Y+1}]-1$, so that $Y = u+c$. We can write $f(Y) = f(u+c) = f(c)+uf_1(u)$ with $f(c) \in B_{\mathrm{dR}}^{\nabla +}(R)$ and $f_1(u) \in B_{\mathrm{dR}}^+(R) = B_{\mathrm{dR}}^{\nabla +}[\![u]\!]$, and similarly for $g(Y), h(Y), r(Y)$. For any $a(u), b(u) \in B_{\mathrm{dR}}^+(R)$, we have
\[
\begin{split}
&\frac{f(Y)e_1+g(Y)e_2}{t}(1+ua(u))+(h(Y)e_1+r(Y)e_2)\frac{u}{t}b(u) = \\
&\frac{f(c)e_1+g(c)e_2}{t}+\frac{u}{t}((f(Y)a(u)+h(Y)b(u)+f_1(u))e_1+(g(Y)a(u)+r(Y)b(u)+g_1(u))e_2).
\end{split}
\]
The system of equations
\[
\begin{split}
f(Y)a(u)+h(Y)b(u) &= -f_1(u),\\
g(Y)a(u)+r(Y)b(u) &= -g_1(u)
\end{split}
\]
has a unique solution $a(u), b(u) \in B_{\mathrm{dR}}^+(R)$, since $f(Y)r(Y)-g(Y)h(Y)$ is a unit in $R[\frac{1}{p}]$. Thus, 
\[
\frac{f(c)e_1+g(c)e_2}{t} \in W_{\mathrm{dR}}^{\nabla +}(D) = \mathrm{Fil}^0 (D\otimes_R B_{\mathrm{dR}}(R))^{\nabla = 0}.
\]
Similarly, for any $a(u), b(u) \in B_{\mathrm{dR}}^+(R)$, 
\[
\begin{split}
&\frac{f(Y)e_1+g(Y)e_2}{t}\cdot tua(u)+(h(Y)e_1+r(Y)e_2)(1+ub(u)) =\\
& h(c)e_1+r(c)e_2+u((f(Y)a(u)+h(Y)b(u)+h_1(u))e_1+(g(Y)a(u)+r(Y)b(u)+r_1(u))e_2)	,
\end{split}
\]
and the system
\[
\begin{split}
f(Y)a(u)+h(Y)b(u) &= -h_1(u),\\
g(Y)a(u)+r(Y)b(u) &= -r_1(u)
\end{split}
\] 
has a unique solution $a(u), b(u) \in B_{\mathrm{dR}}^+(R)$. Thus, $h(c)e_1+r(c)e_2 \in W_{\mathrm{dR}}^{\nabla +}(D)$. We then have
\[
W_{\mathrm{dR}}^{\nabla +}(D) = \frac{f(c)e_1+g(c)e_2}{t}\cdot B_{\mathrm{dR}}^{\nabla +}(R) \oplus (h(c)e_1+r(c)e_2)\cdot B_{\mathrm{dR}}^{\nabla +}(R).
\]
Note that $W_e(D)\otimes_{B_e(R)}B_{\mathrm{dR}}^{\nabla}(R) = W_{\mathrm{dR}}^{\nabla +}(D)\otimes_{B_{\mathrm{dR}}^{\nabla +}(R)} B_{\mathrm{dR}}^{\nabla}(R) = e_1\cdot B_{\mathrm{dR}}^{\nabla}(R) \oplus e_2\cdot B_{\mathrm{dR}}^{\nabla}(R)$, since $f(c)r(c)-g(c)h(c)$ is a unit in $B_{\mathrm{dR}}^{\nabla +}(R)$. In particular, $(W_e(D), W_{\mathrm{dR}}^{\nabla +}(D))$ is a $B$-pair. 

The intersection $W_e(D) \cap W_{\mathrm{dR}}^{\nabla +}(D)$ is given by the set of solutions $(x, y, s, z)$ with $x, y \in B_e(R), ~s, z \in B_{\mathrm{dR}}^{\nabla +}(R)$ satisfying
\[
\begin{split}
\frac{x}{t} &= \frac{f(c)}{t}s+h(c)z,\\	
y &= \frac{g(c)}{t}s+r(c)z. 
\end{split}
\]
Then $x = f(c)s+th(c)z \in B_e(R) \cap B_{\mathrm{dR}}^{\nabla +}(R) = \mathbf{Q}_p$ by Proposition \ref{prop:2.2}, and $\displaystyle y = \frac{y_1}{t}$ with $y_1 \in U_1$.

\begin{prop} \label{prop:3.1}
$f(Y)$ is a unit in $R[\frac{1}{p}]$, and for any such $D$, $V_{\mathrm{cris}}(D) = W_e(D) \cap W_{\mathrm{dR}}^{\nabla +}(R)$ has rank $2$ over $\mathbf{Q}_p$.	
\end{prop}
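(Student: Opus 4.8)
The plan is to analyze the intersection $W_e(D)\cap W_{\mathrm{dR}}^{\nabla+}(D)$ directly, using the explicit description obtained above, and to show the solution space is $2$-dimensional over $\mathbf{Q}_p$. First I would prove that $f(Y)$ is a unit in $R[\frac1p]$. Suppose not; then, working in the PID $R[\frac1p]=K[\![Y]\!][\frac1p]$ (a DVR with uniformizer $Y$), we would have $Y\mid f(Y)$, hence by the coprimality condition ($\varpi\nmid f$ or $\varpi\nmid g$, together with the determinant $fr-gh$ being a unit) one deduces $Y\nmid g(Y)$, so $g(Y)$ is a unit. The point is that modulo $Y$ the line $D_R^1$ becomes $e_2\cdot R[\frac1p]$, i.e.\ it specializes to the $\varphi=1$ eigenline. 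But then the fibre $D_{\mathfrak p}$ of $D$ at the closed point $\mathfrak p=(Y)$ of $\operatorname{Spec} R_0/pR_0$ (via $b_{\mathfrak p}\colon R_0\to W(k)$, $Y\mapsto 0$) is the filtered $\varphi$-module with $\varphi(e_1)=pe_1$, $\varphi(e_2)=e_2$, and $\mathrm{Fil}^1 = e_2\cdot W(k)[\frac1p]$; this is \emph{not} weakly admissible, because the sub-object spanned by $e_2$ has $t_H = 1 > 0 = t_N$. This contradicts punctual weak admissibility of $D$ (Brinon's definition, recalled above), so $f(Y)$ must be a unit. I expect this specialization argument to be the one subtle point; the rest is linear algebra over the period rings.

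Granting that $f(Y)$, hence $f(c)$, is a unit in $B_{\mathrm{dR}}^{\nabla+}(R)$ (its image in $B_{\mathrm{dR}}^{\nabla+}/\mathrm{Fil}^1=\widehat{\overline R}$ is the nonzero constant $f(0)\in K^\times$, so it is a unit), I would solve the system
\[
x = f(c)s + t\,h(c)z,\qquad ty = g(c)s + t\,r(c)z
\]
for $(x,y,s,z)$ with $x,y\in B_e(R)$ and $s,z\in B_{\mathrm{dR}}^{\nabla+}(R)$. As noted above, $x\in B_e(R)\cap B_{\mathrm{dR}}^{\nabla+}(R)=\mathbf{Q}_p$ and $ty = y_1\in U_1$, by Proposition~\ref{prop:2.2}. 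Since $f(c)$ is a unit I can write $s = f(c)^{-1}(x - t\,h(c)z)$, and substituting into the second equation gives
\[
ty = g(c)f(c)^{-1}x + t\bigl(r(c) - g(c)f(c)^{-1}h(c)\bigr)z,
\]
i.e.\ $y_1 = g(c)f(c)^{-1}\,x + t\,\delta(c)\,z$ where $\delta = fr-gh$ divided by $f$ is again a unit. Thus the data is equivalent to a choice of $x\in\mathbf{Q}_p$ together with $y_1\in U_1$ subject to the single congruence $y_1 \equiv g(c)f(c)^{-1}x \pmod{t\,B_{\mathrm{dR}}^{\nabla+}(R)}$ — equivalently, under $\theta\colon U_1\to B_{\mathrm{dR}}^{\nabla+}(R)/\mathrm{Fil}^1$, the condition $\theta(y_1) = \overline{g(c)f(c)^{-1}}\cdot x$, where the right side is just $x\cdot(g(0)/f(0))\in\widehat{\overline R}$. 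By the first exact sequence of Proposition~\ref{prop:2.2}, $U_1\xrightarrow{\theta} B_{\mathrm{dR}}^{\nabla+}(R)/\mathrm{Fil}^1$ is surjective with kernel $\mathbf{Q}_p\cdot t$, so for each $x\in\mathbf{Q}_p$ the set of admissible $y_1$ is a torsor under $\mathbf{Q}_p\cdot t$. Hence the solution set is an extension of $\mathbf{Q}_p$ (the choices of $x$) by $\mathbf{Q}_p$ (the choices of $y_1$ for fixed $x$), giving $\dim_{\mathbf{Q}_p}\bigl(W_e(D)\cap W_{\mathrm{dR}}^{\nabla+}(D)\bigr)=2$.

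Finally I would identify this intersection with $V_{\mathrm{cris}}(D)$: unwinding the definitions, $V_{\mathrm{cris}}(D) = (D\otimes B_{\mathrm{cris}}(R))^{\nabla=0,\varphi=1}\cap \mathrm{Fil}^0(D_R\otimes B_{\mathrm{dR}}(R))^{\nabla=0} = W_e(D)\cap W_{\mathrm{dR}}^{\nabla+}(D)$ inside $W_e(D)\otimes_{B_e(R)}B_{\mathrm{dR}}^{\nabla}(R)$, which is exactly the system solved above. So $V_{\mathrm{cris}}(D)$ has rank $2$ over $\mathbf{Q}_p$, as claimed. The main obstacle, as indicated, is the specialization/weak-admissibility argument forcing $f(Y)$ to be a unit; once that is in hand everything reduces cleanly to Proposition~\ref{prop:2.2}.
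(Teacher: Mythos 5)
The second half of your argument (the rank--$2$ count once $f(Y)$ is known to be a unit) is fine and is essentially the paper's: solve for $s$, reduce to the congruence $\theta(y_1)=\theta\bigl(g(c)f(c)^{-1}\bigr)x$, and count using both exact sequences of Proposition \ref{prop:2.2}. (One slip there: since $\theta(c)=Y$, the image of $f(c)$ in $B_{\mathrm{dR}}^{\nabla +}(R)/\mathrm{Fil}^1=\widehat{\overline{R}}[\frac{1}{p}]$ is $f(Y)$, not the constant $f(0)$; the unit claim still holds because a unit of $R[\frac{1}{p}]$ maps to a unit of $\widehat{\overline{R}}[\frac{1}{p}]$.)

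The first half, however, has a genuine gap. You assert that $R[\frac{1}{p}]=K[\![Y]\!]$ is a DVR with uniformizer $Y$, so that ``$f(Y)$ not a unit'' forces $Y\mid f(Y)$. This is false: $R[\frac{1}{p}]=\mathcal{O}_K[\![Y]\!][\frac{1}{p}]$ is strictly smaller than $K[\![Y]\!]$ and is a PID with infinitely many maximal ideals (those generated by irreducible distinguished polynomials), e.g.\ $f(Y)=Y+p$ is a non-unit not divisible by $Y$. Consequently your specialization at $\mathfrak{p}=(Y)$ detects nothing for such $f$, and in fact no argument based on (punctual) weak admissibility can work here: punctual weak admissibility only tests the Frobenius-compatible specializations coming from $\mathrm{Spec}(R_0/pR_0)=\{(0),(Y)\}$, and the paper's own example with $\mathrm{Fil}^1 D_R=((Y+p)e_1+e_2)\cdot R[\frac{1}{p}]$ is weakly admissible although $f(Y)=Y+p$ is not a unit. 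The unit-ness of $f(Y)$ must instead be deduced from \emph{admissibility}, i.e.\ from the fact that $D=D_{\mathrm{cris}}(V)$ with $V$ of rank $2$, so $V_{\mathrm{cris}}(D)\cong V$ has $\mathbf{Q}_p$-dimension $2$. The paper does this by applying $\theta$ (using $\theta(c)=Y$) to the intersection equations: one gets $x=f(Y)\theta(s)$ with $x\in\mathbf{Q}_p$ and $\theta(s)\in\widehat{\overline{R}}[\frac{1}{p}]$, and if $f(Y)$ is not a unit this forces $x=0$, hence $\theta(s)=0$, $\theta(y_1)=0$, so by Proposition \ref{prop:2.2} the intersection $W_e(D)\cap W_{\mathrm{dR}}^{\nabla +}(D)$ has rank $1$, contradicting the rank-$2$ admissibility. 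You should replace your DVR/specialization step with an argument of this kind.
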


\begin{proof}
Suppose $f(Y)$ is not a unit in $R[\frac{1}{p}]$. Since $\theta(c) = Y$, by applying $\theta$ to the equations
\[
\begin{split}
	x &= f(c)s+th(c)z,\\
	y_1 &= g(c)s+tr(c)z,
\end{split}
\]
we obtain 
\[
\begin{split}
	x &= f(Y)\theta(s),\\
	\theta(y_1) &= g(Y)\theta(s).
\end{split}
\]
Since $x \in \mathbf{Q}_p$ and $\theta(s) \in \widehat{\overline{R}}[\frac{1}{p}]$, we have $x = 0 = \theta(s)$. Then $\theta(y_1) = 0$, and by Proposition \ref{prop:2.2}, the $\mathbf{Q}_p$-module $W_e(D) \cap W_{\mathrm{dR}}^{\nabla +}(D)$ has rank $1$. This contradicts to $D$ being admissible. 

Since $f(Y)$ is a unit in $R[\frac{1}{p}]$, we can choose $h(Y) = 0$ and $r(Y) = 1$. Then $x \in \mathbf{Q}_p$ as above and $s = f(c)^{-1}x \in B_{\mathrm{dR}}^{\nabla +}(R)$. We have $\theta(y_1) = g(Y)\theta(s) = g(Y)f(Y)^{-1}x$ and $\displaystyle z = \frac{y_1-g(c)s}{t} \in B_{\mathrm{dR}}^{\nabla +}(R)$. Hence, $V_{\mathrm{cris}}(D)$ has rank $2$ over $\mathbf{Q}_p$ by Proposition \ref{prop:2.2}. 
\end{proof}

By Proposition \ref{prop:3.1}, we can write $D_R^1 = (e_1+g(Y)e_2)\cdot R[\frac{1}{p}]$ for some $g(Y) \in R[\frac{1}{p}]$. By replacing $e_2$ by $\frac{1}{p^n}e_2$ for some non-negative integer $n$ if necessary, we can further assume $g(Y) = pg_1(Y)$ for some $g_1(Y) \in R = \mathcal{O}_K[\![Y]\!]$. We now show such $D$ arises from a $p$-divisible group over $R$ by constructing the associated relative Breuil module. Let $E(u)$ be the Eisenstein polynomial for $\varpi$ over $R_0$, and let $\mathfrak{S} = R_0[\![u]\!]$ equipped with the Frobenius extending that on $R_0$ by $u \mapsto u^p$. Let $S$ be the $p$-adic completion of the divided power envelope of $\mathfrak{S}$ with respect to the ideal $(E(u))$. Explicitly, the elements of $S$ can be described as 
\[
S = \{\sum_{n \geq 0} a_n\frac{u^n}{\lfloor n/e\rfloor!}~|~ a_n \in R_0, ~a_n \rightarrow 0 ~p\mbox{-adically}\}
\]
where $e$ is the degree of $E(u)$. Note that $S/(E(u)) \cong R$, and the Frobenius on $\mathfrak{S}$ extends uniquely to $S$. Let $\mathrm{Fil}^1 S \subset S$ be the $p$-adically completed ideal generated by the divided powers $\displaystyle \frac{E(u)^n}{n!}, ~n \geq 1$. Since $\displaystyle\frac{\varphi(E(u))}{p}$ is a unit in $S$, we have $\varphi(\mathrm{Fil}^1 S) = pS$ as $S$-modules. Denote by $d_S^u: S \rightarrow S\otimes_{R_0} \widehat{\Omega}_{R_0}$ the connection given by 
\[
d_S^u(\sum_{n \geq 0} a_n\frac{u^n}{\lfloor n/e\rfloor!}) = \sum_{n \geq 0} \frac{u^n}{\lfloor n/e\rfloor!}d_{R_0}(a_n),
\]
where $d_{R_0}: R_0 \rightarrow R_0\otimes_{R_0}\widehat{\Omega}_{R_0}$ is the universal connection and $a_n \in R_0$ such that $a_n \rightarrow 0$ in the $p$-adic topology.

Let $\mathfrak{g}_1 \in S$ be any preimage of $g_1(Y)$ for the map $S \twoheadrightarrow S/\mathrm{Fil}^1 S \cong R$. Let $\mathcal{M} = e_1\cdot S \oplus e_2\cdot S$, equipped with the filtration 
\[
\mathrm{Fil}^1 \mathcal{M} = \mathrm{Fil}^1 S\cdot \mathcal{M}+(e_1+p\mathfrak{g}_1e_2)\cdot S.
\] 
Note that $\mathcal{M}/\mathrm{Fil}^1 \mathcal{M} \cong D_R/D_R^1$ as $R$-modules. Equip $\mathcal{M}$ with the Frobenius given by $\varphi(e_1) = pe_1, ~\varphi(e_2) = e_2$ as above. Then $\varphi(\mathrm{Fil}^1 \mathcal{M}) = p\mathcal{M}$ as $S$-modules. Let $\nabla: \mathcal{M} \rightarrow \mathcal{M}\otimes_{R_0} \widehat{\Omega}_{R_0}$ be the connection over $d_S^u$ given by $\nabla(e_1) = \nabla(e_2) = 0$. Then $\nabla$ is a topologically quasi-nilpotent integrable connection such that the Frobenius is horizontal. Hence, $\mathcal{M}$ gives a Breuil module over $S$ (as defined in \cite[Section 3]{kim-groupscheme-relative}), and by \cite[Theorem 3.5]{kim-groupscheme-relative}, there exists a $p$-divisible group $G_R$ over $R$ such that $\mathcal{M}^*(G_R) \cong \mathcal{M}$ as Breuil modules (cf. \cite{kim-groupscheme-relative} for the definition of the functor $\mathcal{M}^*(\cdot)$ from the category of $p$-divisible groups over $R$ to the category of Breuil modules over $S$). 

For integers $n \geq 0$, we choose compatibly $\varpi_n \in \overline{K}$ such that $\varpi_0 = \varpi$ and $\varpi_{n+1}^p = \varpi_n$, and let $R_{\infty}$ be the $p$-adic completion of $\bigcup_{n \geq 0} R(\varpi_n)$. Then $R_{\infty} \subset \overline{R}$, and let $\mathcal{G}_{R_{\infty}}\coloneqq \mathrm{Gal}(\overline{R}[\frac{1}{p}]/R_{\infty}[\frac{1}{p}])$ be the corresponding sub-Galois group of $\mathcal{G}_R$. Let $[\underline{\varpi}] \in W(\overline{R}^\flat)$ be the Teichm\"uller lift of $\underline{\varpi} = (\varpi_n) \in \overline{R}^\flat$. The $R_0$-algebra map $\mathfrak{S} \rightarrow R_0\otimes_{W(k)}W(\overline{R}^\flat)$ given by $u \mapsto [\underline{\varpi}]$ extends uniquely to $S \rightarrow A_{\mathrm{cris}}(R)$ compatibly with $\mathcal{G}_{R_\infty}$-actions, Frobenius and connection. Let $\mathrm{Fil}^1 A_{\mathrm{cris}}(R) \subset A_{\mathrm{cris}}(R)$ be the $p$-adically completed ideal generated by the divided powers of $\mathrm{ker}(\theta_{R_0})$. Then $S \rightarrow A_{\mathrm{cris}}(R)$ is also compatible with filtration. Define
\[
T(\mathcal{M}) \coloneqq \mathrm{Hom}_{S, \mathrm{Fil}^1, \varphi, \nabla}(\mathcal{M}, A_{\mathrm{cris}}(R)).
\]    
By \cite[Corollary 5.4.2]{kim-groupscheme-relative}, we have a natural isomorphism $T_p(G_R) \cong T(\mathcal{M})$ as $\mathcal{G}_{R_\infty}$-representations. 

To study $\mathcal{G}_R$-actions, let $N: S \rightarrow S$ be a derivation given by $N \coloneqq -u\frac{\partial}{\partial u}$. Let $N_{\mathcal{M}}: \mathcal{M} \rightarrow \mathcal{M}$ be the derivation over $N$ given by $N_{\mathcal{M}}(e_1) = N_{\mathcal{M}}(e_2) = 0$. For each integer $n \geq 0$, define a cocycle $\epsilon^{(n)}: \mathcal{G}_R \rightarrow \widehat{\overline{R}}^{\times}$ by 
\[
\epsilon^{(n)}(g) = g\cdot \varpi_n/\varpi_n
\]
for $g \in \mathcal{G}_R$. Let $\epsilon(g) = (\epsilon^{(n)}(g))_{n \geq 0} \in \overline{R}^\flat$, and $t(g) \coloneqq \log{[\epsilon(g)]} \in A_{\mathrm{cris}}^{\nabla}(R)$. Note that for any $g \in \mathcal{G}_R$, $t(g)$ is a $\mathbf{Z}_p$-multiple of $t$, and $t(g) = 0$ if and only if $g \in \mathcal{G}_{R_{\infty}}$. Define the $\mathcal{G}_R$-action on $\mathcal{M}\otimes_{S}A_{\mathrm{cris}}(R)$ by 
\[
g\cdot (x\otimes a) \coloneqq g(a)\sum_{i = 0}^{\infty} N_{\mathcal{M}}^i(x)\otimes \frac{t(g)^i}{i!}.
\]
Then by \cite[Section 5.5]{kim-groupscheme-relative}, this gives a well-defined $\mathcal{G}_R$-action on $\mathcal{M}\otimes_{S}A_{\mathrm{cris}}(R)$ which recovers the natural $\mathcal{G}_{R_{\infty}}$-action and is compatible with Frobenius and filtration. This induces a $\mathcal{G}_R$-action on $T(\mathcal{M}) \cong \mathrm{Hom}_{A_{\mathrm{cris}}(R), \mathrm{Fil}^1, \varphi, \nabla}(\mathcal{M}\otimes_S A_{\mathrm{cris}}(R), ~A_{\mathrm{cris}}(R))$, and the natural isomorphism $T_p(G_R) \cong T(\mathcal{M})$ is $\mathcal{G}_R$-equivariant.

Consider the map of $R_0[\frac{1}{p}]$-modules $f: D \rightarrow \mathcal{M}[\frac{1}{p}]$ given by $e_i \mapsto e_i$. Then $f$ is compatible with Frobenius and connection. Thus, $f$ induces the map 
\[
\tilde{f}: T(\mathcal{M})[\frac{1}{p}] \rightarrow \mathrm{Hom}_{R_0[\frac{1}{p}], \varphi, \nabla}(D, B_{\mathrm{cris}}(R))
\]
which is compatible with $\mathcal{G}_{R_{\infty}}$-actions. Note that $\tilde{f}$ is injective. Furthermore, since $f(D)$ lies in the kernel of $\mathcal{N}_{\mathcal{M}}$, $\tilde{f}$ is compatible with $\mathcal{G}_R$-actions. On the other hand, since $\varpi - [\underline{\varpi}] \in R\otimes_{W(k)}W(\overline{R}^\flat)$ lies in $\mathrm{ker}(\theta_R)$ and $\nabla(\varpi - [\underline{\varpi}]) = 0$, the image of $\tilde{f}$ lies in 
\[
\mathrm{Hom}_{R_0[\frac{1}{p}], \varphi, \nabla}(D, B_{\mathrm{cris}}(R)[\frac{1}{p}]) \cap \mathrm{Hom}_{R[\frac{1}{p}], \mathrm{Fil}, \nabla}(D_R, B_{\mathrm{dR}}(R)) \cong V_{\mathrm{cris}}(D)^{\vee},
\] 
where $V_{\mathrm{cris}}(D)^{\vee}$ denotes the dual representation of $V_{\mathrm{cris}}(D)$. So $\tilde{f}$ induces an injective map $\tilde{f}: T(\mathcal{M})[\frac{1}{p}] \hookrightarrow V_{\mathrm{cris}}(D)^{\vee}$ of $\mathbf{Q}_p$-vector spaces, and $V_{\mathrm{cris}}(D)^{\vee}$ is rank $2$ over $\mathbf{Q}_p$ by Proposition \ref{prop:3.1}. Thus, it is an isomorphism, and we have an isomorphism of $\mathcal{G}_R$-representations
\[
V_{\mathrm{cris}}(D) \cong (T_p(G_R)\otimes_{\mathbf{Z}_p}\mathbf{Q}_p)^{\vee}.
\]

Using Proposition \ref{prop:3.1}, we can also construct $B$-pairs which are induced from weakly admissible $R_0[\frac{1}{p}]$-modules but do not arise from $\mathbf{Q}_p$-representations. For example, consider the case $K = W(k)[\frac{1}{p}]$ (so that $R = R_0$ is unrafmied), and let $D = e_1\cdot R_0[\frac{1}{p}]\oplus e_2\cdot R_0[\frac{1}{p}]$ equipped with the filtration $\mathrm{Fil}^0 D_R = D_R, ~\mathrm{Fil}^1 D_R = ((Y+p)e_1+e_2)\cdot R[\frac{1}{p}]$ and $\mathrm{Fil}^2 D_R = 0$. Equip $D$ with Frobenius endomorphism given by
\[
\begin{split}
	\varphi(e_1) &= pe_1,\\
	\varphi(e_2) &= e_2,
\end{split}
\]
and equip $D$ with the connection given by $\nabla(e_1) = \nabla(e_2) = 0$. We have $t_H(D) = 1$ and $t_N(D, \mathfrak{p}) = 1$ for any $\mathfrak{p} \in \mathrm{Spec} R_0/pR_0$. Consider $\varphi$-equivariant base change maps $b_0: R_0 = W(k)[\![Y]\!] \rightarrow W(k)$ given by $Y \mapsto 0$ and $b_g: R_0 \rightarrow R_{0, g}$ where $R_{0, g}$ is the $p$-adic completion of $\varinjlim_{\varphi}R_{0, (p)}$. Note that by the universal property of $p$-adic Witt vectors, we have a $\varphi$-equivariant isomorphism $R_{0, g} \cong W(k_g)$ where $k_g$ is the perfect closure $\varinjlim_{\varphi}\mathrm{Frac}(R_0/pR_0)$ of $\mathrm{Frac}(R_0/pR_0)$. To check $D$ is weakly admissible, it suffices to show that the induced filtered $\varphi$-modules $D_0 \coloneqq D\otimes_{R, b_0} W(k)$ and $D_g \coloneqq D\otimes_{R, b_g} W(k_g)$ are weakly admissible. We have $D_0 = \overline{e_1}\cdot W(k)[\frac{1}{p}] \oplus \overline{e_2}\cdot W(k)[\frac{1}{p}]$ with $\mathrm{Fil}^1 D_0 = (p\overline{e_1}+\overline{e_2})\cdot W(k)[\frac{1}{p}]$. It admits a strongly divisible $W(k)$-lattice $M_0 = p\overline{e_1}\cdot W(k)\oplus \frac{\overline{e_2}}{p}\cdot W(k)$ with $\mathrm{Fil}^1 M_0 = (p\overline{e_1}+\overline{e_2})\cdot W(k)$, so $D_0$ is weakly admissible. On the other hand, $D_g = e_1\cdot W(k_g)[\frac{1}{p}] \oplus e_2\cdot W(k_g)[\frac{1}{p}]$ with $\mathrm{Fil}^1 D_g = ((Y+p)e_1+e_2)\cdot W(k_g)[\frac{1}{p}]$. It admits a strongly divisible $W(k_g)$-lattice $M_g = e_1\cdot W(k_g)\oplus \frac{e_2}{p}\cdot W(k_g)$ with $\mathrm{Fil}^1 M_g = ((Y+p)e_1+e_2)\cdot W(k_g)$, so $D_g$ is weakly admissible. Hence, $D$ is a weakly admissible $R_0[\frac{1}{p}]$-module. However, above computations and Proposition \ref{prop:3.1} show that $(W_e(D), W_{\mathrm{dR}}^{\nabla +}(D))$ is a $B$-pair which does not arise from a $\mathbf{Q}_p$-representation, since $(Y+p)$ is not a unit in $R[\frac{1}{p}]$. Thus, the relative case is different from the case when the base ring is a $p$-adic field where every $B$-pair semi-stable of slope $0$ arises from a $\mathbf{Q}_p$-representation. In particular, this answers negatively the question raised in \cite[Section 8]{brinon-relative} whether weakly admissible implies admissible in the relative case.

We summarize above results in the following theorem.

\begin{thm} \label{thm:3.2}
Let $R = \mathcal{O}_K[\![Y]\!]$ whose residue field $k$ is algebraically closed. Let $V$ be a horizontal crystalline $\mathcal{G}_R$-representation of rank $2$ over $\mathbf{Q}_p$ with Hodge-Tate weights in $[0, 1]$ such that its associated isocrystal is reducible. Then $V$ arises from a $p$-divisible group over $R$. Moreover, there exists a $B$-pair which arises from a weakly admissible $R_0[\frac{1}{p}]$-module but does not arise from a $\mathbf{Q}_p$-representation.
\end{thm}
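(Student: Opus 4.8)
The statement collects the constructions of this section, so the plan is to assemble them into a proof. For the first assertion, write $D = D_{\mathrm{cris}}(V)$ and argue by cases on the rank over $R[\frac{1}{p}]$ of $D_R^1 = \mathrm{Fil}^1 D_R$. If $D$ is \'{e}tale or multiplicative (i.e. $D_R^1 = D_R$ or $D_R^1 = 0$), then $D$ is induced from a filtered $\varphi$-module over $W(k)[\frac{1}{p}]$, which is weakly admissible hence admissible by \cite{colmez-fontaine}, so Kisin's theorem \cite[Corollary 2.2.6]{kisin-crystalline} over the $p$-adic field $W(k)[\frac{1}{p}]$ produces a $p$-divisible group over $\mathcal{O}_K = W(k)$, which base-changes along $W(k) \to R$; I would record this reduction explicitly. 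In the remaining case $D_R^1$ is free of rank $1$. Using $k = \overline{k}$, the reducibility of the isocrystal $D_{\mathrm{cris}}^{\nabla}(V)$, and weak admissibility (which forces every isoclinic subobject to have slope $\geq 0$), I would apply Dieudonn\'{e}--Manin to fix a basis $(e_1, e_2)$ of $D_{\mathrm{cris}}^{\nabla}(V)$ with $\varphi(e_1) = pe_1$ and $\varphi(e_2) = e_2$, and write $D_R^1 = (f(Y)e_1 + g(Y)e_2)R[\frac{1}{p}]$. The linchpin is Proposition~\ref{prop:3.1}: $f(Y)$ must be a unit in $R[\frac{1}{p}]$, which one proves by applying $\theta$ to the equations cutting out $W_e(D) \cap W_{\mathrm{dR}}^{\nabla +}(D)$ and observing that if $f(Y)$ were a non-unit then $\theta(s) = 0$, collapsing this intersection to $\mathbf{Q}_p$-rank $1$ (by Proposition~\ref{prop:2.2}) and contradicting the admissibility of $D$.

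Granting this, I would normalize to $D_R^1 = (e_1 + pg_1(Y)e_2)R[\frac{1}{p}]$ with $g_1 \in R$, lift $g_1$ to $\mathfrak{g}_1 \in S$, and build the relative Breuil module $\mathcal{M} = e_1 S \oplus e_2 S$ with $\mathrm{Fil}^1\mathcal{M} = \mathrm{Fil}^1 S \cdot \mathcal{M} + (e_1 + p\mathfrak{g}_1 e_2)S$, Frobenius $\varphi(e_1) = pe_1$, $\varphi(e_2) = e_2$, and connection $\nabla(e_1) = \nabla(e_2) = 0$ over $d_S^u$; then check $\varphi(\mathrm{Fil}^1\mathcal{M}) = p\mathcal{M}$ and that $\nabla$ is integrable, topologically quasi-nilpotent and compatible with $\varphi$, so that $\mathcal{M}$ is a Breuil module in the sense of \cite[Section 3]{kim-groupscheme-relative}, and invoke \cite[Theorem 3.5]{kim-groupscheme-relative} to obtain a $p$-divisible group $G_R$ over $R$ with $\mathcal{M}^*(G_R) \cong \mathcal{M}$. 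To identify $V$, use the $\varphi, \nabla$-compatible map $f\colon D \to \mathcal{M}[\frac{1}{p}]$, $e_i \mapsto e_i$; dualizing gives an injection $\tilde{f}\colon T(\mathcal{M})[\frac{1}{p}] \hookrightarrow \mathrm{Hom}_{R_0[\frac{1}{p}], \varphi, \nabla}(D, B_{\mathrm{cris}}(R))$ which is $\mathcal{G}_R$-equivariant because $f(D) \subset \ker N_{\mathcal{M}}$, and whose image lies in $V_{\mathrm{cris}}(D)^{\vee}$ because $\varpi - [\underline{\varpi}] \in \ker\theta_R$ is horizontal; since both sides have $\mathbf{Q}_p$-dimension $2$ by Proposition~\ref{prop:3.1}, $\tilde{f}$ is an isomorphism and, using \cite[Corollary 5.4.2]{kim-groupscheme-relative}, $V \cong V_{\mathrm{cris}}(D) \cong (T_p(G_R)\otimes_{\mathbf{Z}_p}\mathbf{Q}_p)^{\vee}$.

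For the second assertion, I would exhibit the explicit module: take $K = W(k)[\frac{1}{p}]$, so $R = R_0 = W(k)[\![Y]\!]$, and set $D = e_1 R_0[\frac{1}{p}] \oplus e_2 R_0[\frac{1}{p}]$ with $\varphi(e_1) = pe_1$, $\varphi(e_2) = e_2$, $\nabla = 0$, $\mathrm{Fil}^0 D_R = D_R$, $\mathrm{Fil}^1 D_R = ((Y+p)e_1 + e_2)R[\frac{1}{p}]$, $\mathrm{Fil}^2 D_R = 0$. To see $D$ is weakly admissible, reduce via the $\varphi$-equivariant base changes $b_0\colon Y \mapsto 0$ to $W(k)$ and $b_g\colon R_0 \to W(k_g)$ to the perfect closure of $\mathrm{Frac}(R_0/pR_0)$, and exhibit the strongly divisible lattices $M_0 = p\overline{e_1}W(k) \oplus p^{-1}\overline{e_2}W(k)$ and $M_g = e_1 W(k_g) \oplus p^{-1}e_2 W(k_g)$; together with $t_H(D) = t_N(D, \mathfrak{p}) = 1$ at every $\mathfrak{p}$, this gives weak admissibility. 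Then the computation of $W_e(D)$ and $W_{\mathrm{dR}}^{\nabla +}(D)$ preceding Proposition~\ref{prop:3.1}, applied with $f(Y) = Y + p$ (which is \emph{not} a unit in $R[\frac{1}{p}]$), together with the argument of that proposition, shows $W_e(D) \cap W_{\mathrm{dR}}^{\nabla +}(D)$ has $\mathbf{Q}_p$-rank $1$; since on the essential image of $W_R$ the functor $(W_e, W_{\mathrm{dR}}^{\nabla +}) \mapsto W_e \cap W_{\mathrm{dR}}^{\nabla +}$ is a quasi-inverse to $W_R$, a rank-$2$ $B$-pair arising from a representation would have intersection of rank $2$, so $(W_e(D), W_{\mathrm{dR}}^{\nabla +}(D))$ does not arise from a $\mathbf{Q}_p$-representation. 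The step I expect to be the genuine obstacle is Proposition~\ref{prop:3.1} --- extracting, from the purely abstract admissibility of $V$, the concrete fact that the slope-filtration component $f(Y)$ is a unit, which is precisely what makes the Breuil-module construction legitimate; a secondary technical nuisance is verifying that the cocycle-twisted $\mathcal{G}_R$-action on $\mathcal{M}\otimes_S A_{\mathrm{cris}}(R)$ is well defined and respected by $\tilde{f}$.
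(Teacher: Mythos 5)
Your proposal follows the paper's own argument essentially step for step: the case split on $\mathrm{Fil}^1 D_R$, the Dieudonn\'{e}--Manin basis $(e_1,e_2)$ with $\varphi(e_1)=pe_1$, $\varphi(e_2)=e_2$, Proposition~\ref{prop:3.1} as the key input, the Breuil-module construction combined with Kim's classification and the $N_{\mathcal{M}}$-twisted $\mathcal{G}_R$-action to get $V \cong (T_p(G_R)\otimes_{\mathbf{Z}_p}\mathbf{Q}_p)^{\vee}$, and the explicit weakly admissible module with $\mathrm{Fil}^1 D_R = ((Y+p)e_1+e_2)\cdot R[\frac{1}{p}]$ for the second assertion. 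The only blemish is the aside ``$\mathcal{O}_K = W(k)$'' in the \'{e}tale/multiplicative case, which is false when $K$ is ramified, though harmless there since the filtration is trivial in those cases and one can produce the ($p$-divisible group giving the) representation over $W(k)$ and base change to $R$, which is what the paper implicitly does.
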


\begin{rem} \label{rem:3.3}
When $R_0$ is a general relative base ring over $W(k)\langle X^{\pm 1}\rangle$ with Krull dimension $2$ and $R = R_0\otimes_{W(k)}\mathcal{O}_K$ with ramification index $e < p-1$, then it is proved in \cite{liu-moon-rel-cryst} using a completely different method that every crystalline $\mathcal{G}_R$-representation with Hodge-Tate weights in $[0, 1]$ arises from a $p$-divisible group over $R$. The argument in \cite{liu-moon-rel-cryst} crucially relies on the assumption that $e$ is small, even for the case $R = \mathcal{O}_K[\![Y]\!]$.	
\end{rem}

\bibliographystyle{amsalpha}
\bibliography{library}
	
\end{document}